\begin{document}
\newtheorem{problem}{Problem}
\newtheorem{theorem}{Theorem}
\newtheorem{lemma}[theorem]{Lemma}
\newtheorem{claim}[theorem]{Claim}
\newtheorem{cor}[theorem]{Corollary}
\newtheorem{prop}[theorem]{Proposition}
\newtheorem{definition}{Definition}
\newtheorem{question}[theorem]{Question}
\newtheorem{conj}{Conjecture}
\newtheorem{hypothesis}{Hypothesis}
\def\cA{{\mathcal A}}
\def\cB{{\mathcal B}}
\def\cC{{\mathcal C}}
\def\cD{{\mathcal D}}
\def\cE{{\mathcal E}}
\def\cF{{\mathcal F}}
\def\cG{{\mathcal G}}
\def\cH{{\mathcal H}}
\def\cI{{\mathcal I}}
\def\cJ{{\mathcal J}}
\def\cK{{\mathcal K}}
\def\cL{{\mathcal L}}
\def\cM{{\mathcal M}}
\def\cN{{\mathcal N}}
\def\cO{{\mathcal O}}
\def\cP{{\mathcal P}}
\def\cQ{{\mathcal Q}}
\def\cR{{\mathcal R}}
\def\cS{{\mathcal S}}
\def\cT{{\mathcal T}}
\def\cU{{\mathcal U}}
\def\cV{{\mathcal V}}
\def\cW{{\mathcal W}}
\def\cX{{\mathcal X}}
\def\cY{{\mathcal Y}}
\def\cZ{{\mathcal Z}}

\def\A{{\mathbb A}}
\def\B{{\mathbb B}}
\def\C{{\mathbb C}}
\def\D{{\mathbb D}}
\def\E{{\mathbb E}}
\def\F{{\mathbb F}}
\def\G{{\mathbb G}}
\def\I{{\mathbb I}}
\def\J{{\mathbb J}}
\def\K{{\mathbb K}}
\def\L{{\mathbb L}}
\def\M{{\mathbb M}}
\def\N{{\mathbb N}}
\def\O{{\mathbb O}}
\def\P{{\mathbb P}}
\def\Q{{\mathbb Q}}
\def\R{{\mathbb R}}
\def\S{{\mathbb S}}
\def\T{{\mathbb T}}
\def\U{{\mathbb U}}
\def\V{{\mathbb V}}
\def\W{{\mathbb W}}
\def\X{{\mathbb X}}
\def\Y{{\mathbb Y}}
\def\Z{{\mathbb Z}}

\def\ep{{\mathbf{e}}_p}
\def\em{{\mathbf{e}}_m}
\def\eq{{\mathbf{e}}_q}

\def\scr{\scriptstyle}
\def\\{\cr}
\def\({\left(}
\def\){\right)}
\def\[{\left[}
\def\]{\right]}
\def\<{\langle}
\def\>{\rangle}
\def\fl#1{\left\lfloor#1\right\rfloor}
\def\rf#1{\left\lceil#1\right\rceil}
\def\le{\leqslant}
\def\ge{\geqslant}
\def\eps{\varepsilon}
\def\mand{\qquad\mbox{and}\qquad}

\def\sssum{\mathop{\sum\ \sum\ \sum}}
\def\ssum{\mathop{\sum\, \sum}}
\def\ssumw{\mathop{\sum\qquad \sum}}

\def\vec#1{\mathbf{#1}}
\def\inv#1{\overline{#1}}
\def\num#1{\mathrm{num}(#1)}
\def\dist{\mathrm{dist}}

\def\fA{{\mathfrak A}}
\def\fB{{\mathfrak B}}
\def\fC{{\mathfrak C}}
\def\fU{{\mathfrak U}}
\def\fV{{\mathfrak V}}

\newcommand{\bflambda}{{\boldsymbol{\lambda}}}
\newcommand{\bfxi}{{\boldsymbol{\xi}}}
\newcommand{\bfrho}{{\boldsymbol{\rho}}}
\newcommand{\bfnu}{{\boldsymbol{\nu}}}

\def\GL{\mathrm{GL}}
\def\SL{\mathrm{SL}}

\def\Hba{\overline{\cH}_{a,m}}
\def\Hta{\widetilde{\cH}_{a,m}}
\def\Hb1{\overline{\cH}_{m}}
\def\Ht1{\widetilde{\cH}_{m}}

\def\flp#1{{\left\langle#1\right\rangle}_p}
\def\flm#1{{\left\langle#1\right\rangle}_m}
\def\dmod#1#2{\left\|#1\right\|_{#2}}
\def\dmodq#1{\left\|#1\right\|_q}

\def\Zm{\Z/m\Z}

\def\Err{{\mathbf{E}}}

\newcommand{\comm}[1]{\marginpar{%
\vskip-\baselineskip 
\raggedright\footnotesize
\itshape\hrule\smallskip#1\par\smallskip\hrule}}

\def\xxx{\vskip5pt\hrule\vskip5pt}


\title{Large values of the error term in the prime number theorem}

\author[B. Kerr] {Bryce Kerr}
\address{BK: School of Science, University of New South Wales, Canberra, ACT, 2612, Australia}
\email{bryce.kerr@unsw.edu.au}

\begin{abstract}
Assume the Riemann hypothesis throughout. We obtain some new estimates for the size of the set of large values of the error term in the prime number theorem. Our argument is based on an analysis of the behavior of zeros of the Riemann zeta function in Bohr sets.
\end{abstract}


\maketitle
\section{Introduction}

We assume the Riemann hypothesis throughout. In particular, each complex zero $\rho$ of the Rieman zeta function $\zeta$ may be represented in the form
\begin{align}
\label{eq:gamma-def}
\rho=\frac{1}{2}+i\gamma, \quad \gamma\in \R.
\end{align}
A classic result of von Koch~\cite{vonK} states that
\begin{align}
\label{eq:psi}
\psi(x)=x+O(x^{1/2}(\log{x})^2),
\end{align}
where
$$\psi(x)=\sum_{n\le x}\Lambda(n)$$
denotes the Chebyshev function and $\Lambda(n)$ the von Mangoldt function.  The sharpest explicit form of~\eqref{eq:psi} is due to Schoenfeld~\cite{Sch}, who showed for large enough $x$
\begin{align}
\label{eq:scho}
|\psi(x)-x|\le \frac{1}{8\pi}x^{1/2}(\log{x})^{2}.
\end{align}
It is expected that the estimate~\eqref{eq:scho} does not represent the true rate of growth of $\psi(x)-x$. Monach and Montgomery, see~\cite[Chapter~15]{MV}, have shown that a strong form of the linear independence conjecture implies 
\begin{align}
\label{eq:suplim}
\limsup_{x\rightarrow \infty}\frac{\psi(x)-x}{x^{1/2}(\log\log\log{x})^{3}}\ge \frac{1}{2\pi},
\end{align}
and
\begin{align}
\label{eq:inflim}
\liminf_{x\rightarrow \infty}\frac{\psi(x)-x}{x^{1/2}(\log\log\log{x})^{3}}\le -\frac{1}{2\pi}.
\end{align}
There has been some speculation that both~\eqref{eq:suplim} and~\eqref{eq:inflim} may be strengthened to equality, see~\cite[pg. 484]{MV}. 

A fundamental problem is to improve the constant $1/8\pi$ in~\eqref{eq:scho}. With $\gamma$ is as in~\eqref{eq:gamma-def}, this is more or less equivalent to bounding exponential sums of the form
\begin{align}
\label{eq:abc-47}
\sum_{0<\gamma \le T}x^{i\gamma}
\end{align}
in a range of parameters 
\begin{align}
\label{eq:range of parameters}
T^{1/2}\le x\le T^{A}
\end{align}
with $A$ as large as possible. Our main source of knowledge about the sum~\eqref{eq:abc-47} comes from the prime numbers via contour integration. This is known as the Landau-Gonek formula~\cite{Gonek} and implies the following estimate 
$$\sum_{0\gamma \le T}x^{i\gamma}\ll \left(\frac{T}{x^{1/2}}+x^{1/2}\right)x^{o(1)}.$$
The last term on the right comes from the error in truncating an integral involving $x^{s}$ along the line $\Re{(s)}=1$, leaving no clear path to establishing a suitable estimate for~\eqref{eq:abc-47} in the range of parameters~\eqref{eq:range of parameters} via classical techniques.

 The only progress on this problem has been conditional on Montgomery's pair correlation conjecture, which states that for any fixed $0<\alpha<\beta$ we have
\begin{align}
\label{eq:pcc}
\nonumber &\left|\left\{ |\gamma|,|\gamma'|\le T \ : \ \gamma-\gamma'\in \left[\frac{2\pi \alpha}{\log{T}},\frac{2\pi \beta}{\log{T}}\right] \right\}\right| \\ & \quad \quad =(1+o(1))\int_{\alpha}^{\beta}\left(1-\left(\frac{\sin{\pi u}}{u}\right)^2 \right)du\left(\frac{T\log{T}}{\pi}\right).
\end{align}
 Gallagher and Mueller~\cite{GM} have show that~\eqref{eq:pcc} implies 
\begin{align}
\label{eq:psi-1}
\psi(x)=x+o(x^{1/2}(\log{x})^2)
\end{align}
and developing a precise relationship between the error terms in~\eqref{eq:pcc} and~\eqref{eq:psi-1} has been the subject of a number of works, see~\cite{GM,HBG,GS1,GS,HB,LL,LPZ}.  We refer the reader to Odlyzko~\cite{Odl} for numerical verifications of~\eqref{eq:pcc} and Hejhal~\cite{Hej} and Rudnick and Sarnak~\cite{RS} for investigations into higher level correlations between $\gamma$'s. There has been very little progress towards establishing~\eqref{eq:pcc} and this motivates the problem of estimating the size of the set of exceptions to~\eqref{eq:psi-1}. For example Gallagher~\cite{Gal} has shown

\begin{align}
\label{eq:psi-11}
\psi(x)=x+O(x^{1/2}(\log\log{x})^2),
\end{align}
except for a set of finite logarithmic measure. 

The first distributional estimates for $\psi(x)-x$ were obtained by Wintner~\cite{Wint}, who showed the existence of a measure $\nu$ such that for all absolutely continuous functions $f$
\begin{align}
\label{eq:123}
\lim_{y\rightarrow \infty}\frac{1}{y}\int_{0}^{y}f\left(\frac{\psi(e^{u})-e^{u}}{e^{u/2}}\right)du=\int_{\R}f(x)d\nu(x).
\end{align}
 We refer the reader to~\cite{ANS,Ng,RS} for various extensions of Wintner's result. It is difficult to establish properties of the measure $\nu$ in~\eqref{eq:123} without information about the diophantine nature of $\gamma$'s. However, it is possible to estimate the rate of decay of $\nu$.

Calculations of Wintner~\cite{Wint} imply the existence of an absolute constant $c$ such that for any even integer $k$ \begin{align}
\label{eq:wint}
\int_{0}^{X}\left|\psi(x)-x\right|^{k}dx\ll (c k^2)^{k}X^{k+1}.
\end{align}
Wintner did not give an explicit value of $c$. Evaluating the sum given by~\cite[Equation (11)]{Wint} shows one may take
\begin{align}
\label{eq:c-const}
c=(1+o(1))\frac{2}{\pi},
\end{align}
where the term $o(1)\rightarrow 0$ as $k\rightarrow \infty$.

We refer the reader to~\cite{RF} for progress on a related problem of estimating moments from below.

One consequence of~\eqref{eq:wint} is the following large values estimate:
\begin{prop}
\label{prop:largevalue}
Let $c$ be as in~\eqref{eq:wint} and $\mu$ denote the Lebesgue measure. For any $\varepsilon>0$ we have 
\begin{align*}
\mu\left(\left\{x \le X \ : \ |\psi(x)-x|\ge \varepsilon x^{1/2}(\log{x})^{2}\right\}\right)\ll X^{1-c'\varepsilon^{1/2}},
\end{align*}
where 
$$c'=2\exp(-c/2-1)+o(1),$$ and
 $o(1)\rightarrow 0$ as $X\rightarrow 0$.
\end{prop}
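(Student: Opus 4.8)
The plan is to derive the large-values estimate from the moment bound~\eqref{eq:wint} by a standard Chebyshev-type argument, optimizing over the choice of the even integer $k$. Write
$$\cE_X=\left\{x\le X \ : \ |\psi(x)-x|\ge \varepsilon x^{1/2}(\log x)^2\right\}$$
for the exceptional set in question. The first step is to split off a harmless initial segment: for $x\le X^{1/2}$, say, the contribution to $\mu(\cE_X)$ is at most $X^{1/2}$, which is absorbed into the claimed bound $X^{1-c'\varepsilon^{1/2}}$ once $\varepsilon$ is fixed and $X$ is large, so we may assume $x\in(X^{1/2},X]$. On this range $x^{1/2}(\log x)^2\ge X^{1/4}(\tfrac12\log X)^2$, and more importantly we may replace the weight $x^{1/2}(\log x)^2$ by the monotone lower bound coming from the smallest value of $x$ under consideration; the cleanest route is to work with the substitution used in~\eqref{eq:wint} directly and note that membership in $\cE_X$ forces $|\psi(x)-x|^k\ge \varepsilon^k x^{k/2}(\log x)^{2k}$.

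Next I would apply Markov's inequality in the form
$$\mu(\cE_X)\le \frac{1}{\varepsilon^k X^{k/2}(\log X)^{2k}\,2^{-2k}}\int_{0}^{X}|\psi(x)-x|^k\,dx \le \frac{(ck^2)^k X^{k+1}}{\varepsilon^k X^{k/2}(\log X)^{2k}\,2^{-2k}},$$
where the first inequality uses that on $\cE_X\cap(X^{1/2},X]$ we have $x^{k/2}(\log x)^{2k}\ge (X^{1/2})^{k/2}(\tfrac12\log X)^{2k}$; here one should be a little more careful and instead integrate against the true weight, but the upshot after collecting powers of $X$ is a bound of the shape
$$\mu(\cE_X)\ll X\left(\frac{4ck^2}{\varepsilon (\log X)^{2}}\cdot\frac{1}{X^{1/2}/X^{1/2}}\right)^{k},$$
so that the $X$-dependence inside the $k$-th power is exactly $X^{-1/2}\cdot X^{1/2}$ — I will need to track this honestly, but the point is that the dominant saving is $X$ to the power $1$ times $\left(C k^2/(\varepsilon (\log X)^2)\right)^k$ for an absolute constant $C$ coming from $4c$.

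Now comes the optimization, which is the one genuinely delicate step. Choosing $k$ to be (the nearest even integer to) $\lambda \log X$ for a parameter $\lambda>0$ to be selected, the bound becomes $X\cdot\exp\bigl(k\log(Ck^2/(\varepsilon(\log X)^2))\bigr)=X\exp\bigl(\lambda\log X\cdot\log(C\lambda^2/\varepsilon)\bigr)$ up to lower-order terms, i.e. $X^{1+\lambda\log(C\lambda^2/\varepsilon)}$. We want the exponent $1+\lambda\log(C\lambda^2/\varepsilon)$ to be as small as possible; differentiating in $\lambda$ gives the optimum at $\log(C\lambda^2/\varepsilon)=-2$, i.e. $\lambda=\varepsilon^{1/2}e^{-1}C^{-1/2}$, at which point $\lambda\log(C\lambda^2/\varepsilon)=-2\lambda=-2e^{-1}C^{-1/2}\varepsilon^{1/2}$. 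With $C=4c$ this yields exponent $1-2\exp(-1)(4c)^{-1/2}\varepsilon^{1/2}=1-\exp(-1)c^{-1/2}\varepsilon^{1/2}=1-2\exp(-c/2-1)\varepsilon^{1/2}$ after rewriting $c^{-1/2}=2\exp(-(\log c)/2)$ — the stated constant $c'=2\exp(-c/2-1)$ should drop out once the bookkeeping of the $o(1)$ terms (from rounding $k$ to an even integer, from the $(\log x)^2$ versus $(\log X)^2$ discrepancy, and from the initial segment) is done, all of which tend to $0$ as $X\to\infty$. The main obstacle is precisely this constant-chasing: one has to be scrupulous about where factors of $2$ enter (from $x\ge X^{1/2}$ giving $x^{1/2}\ge X^{1/4}$, and from $\log x\ge \tfrac12\log X$), make sure the net power of $X$ outside the $k$-th power is exactly $1$ and not something like $3/2$, and verify that Wintner's constant $c$ in~\eqref{eq:c-const} feeds through to give exactly $c'=2\exp(-c/2-1)$ rather than a nearby expression; everything else is a routine application of Markov's inequality and single-variable calculus.
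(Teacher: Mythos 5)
Your overall strategy --- Chebyshev's inequality against the $k$-th moment \eqref{eq:wint} followed by optimization over $k\asymp\varepsilon^{1/2}\log X$ --- is certainly the intended route (the paper states the proposition as a direct consequence of \eqref{eq:wint} and prints no proof of it), but as written your argument has two genuine gaps. The first is the step you yourself flag with ``I will need to track this honestly'': the powers of $X$ do not balance. Cutting only at $x>X^{1/2}$ gives $x^{k/2}\ge X^{k/4}$, not the $X^{k/2}$ you place in the denominator; and even granting $X^{k/2}$ there, dividing $(ck^2)^kX^{k+1}$ by $\varepsilon^kX^{k/2}(\log X)^{2k}$ leaves $X^{k/2+1}$, not $X$. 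Any uncancelled factor $X^{\delta k}$ with $\delta>0$ fixed is fatal, since with $k\asymp\varepsilon^{1/2}\log X$ it equals $X^{\delta\varepsilon^{1/2}\log X}$, vastly larger than the saving $X^{-c'\varepsilon^{1/2}}$ you are extracting. The repair is to decompose $(0,X]$ into blocks $[Y,(1+o(1))Y]$ on which the weight $x^{k/2}(\log x)^{2k}$ is constant up to $(1+o(1))^k$, apply Chebyshev and the moment bound block by block, and sum the resulting bounds $Y^{1-c'\varepsilon^{1/2}}$. Note that even the factor $2^{k/2}$ arising from a plain dyadic decomposition equals $X^{\lambda(\log 2)/2}$ with $\lambda\asymp\varepsilon^{1/2}$, so it perturbs $c'$ at leading order: the block width genuinely matters for the constant. (Relatedly, the Chebyshev argument only closes if the exponent in \eqref{eq:wint} is read as $X^{k/2+1}$, i.e.\ as a bound on the moments of $(\psi(x)-x)/x^{1/2}$; with $X^{k+1}$ taken literally no choice of $k$ works, and you should say explicitly which normalization you are using.)

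The second gap is the constant. Your optimization is correct as far as it goes: with moment constant $(Ck^2)^k$ the exponent is $1+\lambda\log(C\lambda^2/\varepsilon)$, minimized at $\lambda=e^{-1}(\varepsilon/C)^{1/2}$ with value $1-2e^{-1}C^{-1/2}\varepsilon^{1/2}$, i.e.\ $c'=2\exp\bigl(-1-\tfrac12\log C\bigr)$. But your closing identity $e^{-1}c^{-1/2}=2\exp(-c/2-1)$, justified by ``$c^{-1/2}=2\exp(-(\log c)/2)$'', is false on both counts: $c^{-1/2}=\exp(-\tfrac12\log c)$ with no factor $2$, and $\exp(-\tfrac12\log c)\ne\exp(-c/2)$. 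Numerically, at $c=2/\pi$ your expression $e^{-1}c^{-1/2}\approx 0.461$ while the stated $2\exp(-c/2-1)\approx 0.535$. Moreover the input $C=4c$ is an artefact of the overly long range $x>X^{1/2}$ (the $4^k$ from $\log x\ge\tfrac12\log X$ disappears on short blocks, where $\log x=(1+o(1))\log Y$), so it should not appear at all. A correct execution of your method yields $c'=2\exp\bigl(-1-\tfrac12\log c\bigr)$, which agrees with the stated $2\exp(-1-c/2)$ only if the moment constant is $e^{c}$ rather than $c$; this discrepancy must be confronted rather than absorbed into the $o(1)$ by an invalid algebraic identity.
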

In particular, with $c$ as in~\eqref{eq:c-const}, we may take 
$$c'\approx 0.53517...$$

In this paper we investigate the extent to which it is possible to improve on Proposition~\ref{prop:largevalue}. 
By considering the behaviour of $\gamma$'s in Bohr sets, we show that such an improvement is possible. A second consequence of our work is that the set of large values $x\in [X,2X]$ of $|\psi(x)-x|$ must concentrate into a small number of short intervals $\cI$, see Theorem~\ref{thm:main1}.

\subsection{Outline of our argument}
In order to show that large values of $|\psi(x)-x|$ concentrate into short intervals, we analyse the behaviour of $\gamma$'s in Bohr sets. Using the analytic approximation

$$\psi(x)-x\approx x^{1/2}\sum_{|\gamma|\le X^{1/2}}\frac{x^{i\gamma}}{1/2+i\gamma},$$
we see that each value of $x$ satisfying 
\begin{align}
\label{eq:psi-c}
|\psi(x)-x|\ge \varepsilon x^{1/2}(\log{x})^{2},
\end{align}
 corresponds to many values of $t\le X^{1/2}$ satisfying
\begin{align}
\label{eq:qwe}
\left|\sum_{0\le \gamma \le t}x^{i\gamma}\right|\ge \frac{\varepsilon }{2}N(t),
\end{align}
where 
\begin{align*}
N(t)=|\{ 0\le \gamma \le t \ : \ \zeta(1/2+i\gamma)=0\}|.
\end{align*}
An application of the pigeonhole principle allows us to obtain some $t$ such that for most $x$ satisfying~\eqref{eq:psi-c} we have~\eqref{eq:qwe}.

 A well known principle in combinatorics asserts that large exponential sums concentrate into Bohr sets. Each $x$ satisfying~\eqref{eq:qwe} corresponds to a pair $(x,\beta_x)$ satisfying 
\begin{align}
\label{eq:Bohr-1}
\left|\left\{ 0\le \gamma \le t \ : \left\|\frac{\log{x}}{2\pi} \gamma+\beta_x\right\|\le \delta \right\}\right|\ge 2\delta\left(1+\frac{\varepsilon}{8}\right)N(t),
\end{align}
where $\|.\|$ denotes distance to the nearest integer, see Lemma~\ref{lem:concentration}. The inequality~\eqref{eq:Bohr-1} is larger than expected by a factor $(1+\varepsilon/8)$.  We refer the reader to work of  Ford and  Zaharescu~\cite{FZ} and Ford, Soundararajan and Zaharescu~\cite{FSZ} for various results and conjectures which suggest that trying to obtain a contradiction directly from~\eqref{eq:Bohr-1} would be very difficult. Instead, we proceed by assuming many pairs $(x,\beta_x)$ satisfy~\eqref{eq:Bohr-1}. This allows us to amplify the factor $(1+\varepsilon/8)$.

H\"{o}lder's inequality implies that for any integer $k$, there exists many $k$-tuples
\begin{align}
\label{eq:xb}
x_1,\dots,x_k,\beta_1,\dots,\beta_k
\end{align}
 such that 
\begin{align}
\label{eq:Bohr-2}
\left|\left\{ 0\le \gamma \le t \ : \left\|\frac{\log{x_j}}{2\pi} \gamma+\beta_j\right\|\le \delta, \ \ 1\le j \le k \right\}\right|\ge (2\delta)^{k}\left(1+\frac{\varepsilon}{8}\right)^{k}N(t).
\end{align}

We dext discuss some heuristics regarding Bohr sets and refer the reader to Lemma~\ref{lem:bohr-sequence} for a precise statement of the argument sketched below. 

For most choices of tuples $(x_1,\dots,x_k)$, we have 
\begin{align}
\label{eq:aabc-99}
\mu\left(\left\{ y\in [0,t] \ : \left\|\frac{\log{x_j}}{2\pi} y+\beta_j\right\|\le \delta, \ \ 1\le j \le k \right\}\right)\approx (2\delta)^{k}t,
\end{align}
with some uniformity in the parameter $\delta$. This can be seen by interpreting the above  volume calculation~\eqref{eq:aabc-99} in terms of integer points close to the line 
$$\left(\frac{\log{x_1}}{2\pi} y+\beta_1,\dots,\frac{\log{x_k}}{2\pi} y+\beta_k\right) \quad 0\le y \le t,$$
and applying a transference theorem from the geometry of numbers. 

 Consider the set~\eqref{eq:aabc-99} as a union of $N$ intervals $I_1,\dots,I_N$ 
\begin{align}
\label{eq:intun}
\left\{ y\in [0,t] \ : \left\|\frac{\log{x_j}}{2\pi} y+\beta_j\right\|\le \rho, \ \ 1\le j \le k \right\}=\bigcup_{j=1}^{N}I_j.
\end{align}
If we extend the endpoints of each $I_j$ by a factor $\approx \eta/\log{X}$ then each point $y$ in the resulting set satisfies 
$$\left\|\frac{\log{x_j}}{2\pi} y+\beta_j\right\|\le \delta+\eta, \quad 1\le j \le k.$$
Hence~\eqref{eq:aabc-99} implies roughtly that
\begin{align*}
(2\delta)^{k}t+\frac{2\eta}{\log{X}}N\approx 2^{k}(\delta+\eta)^{k}t.
\end{align*}
In particular~\eqref{eq:intun} is the union of 
$N\approx (2\delta)^{k-1}(\log{X})t,$ intervals of length 
\begin{align}
\label{eq:short-length}
\approx \frac{\delta}{\log{X}}.
\end{align}
From~\eqref{eq:Bohr-2} there are many intervals  of length~\eqref{eq:short-length} containing $\approx (1+\varepsilon)^{k}$ zeros of $\zeta$. Montgomery's work towards the pair correlation conjecture implies the average density of zeros in intervals of length~\eqref{eq:short-length} is $O(1)$. This allows us to obtain a contradiction by taking $k$ sufficiently large.

 \section{Main results}
\begin{theorem}
\label{thm:main1}
Let $\varepsilon,\delta>0$ be small and $X$ sufficiently large. Suppose $\cX\subseteq [X,2X]$ is  a $X^{1-(1-2\delta)(2\pi \varepsilon)^{1/2}}$ seperated set satisfying
\begin{align*}
|\psi(x)-x|\ge \varepsilon (\log{x})^2 x^{1/2}, \quad x\in \cX.
\end{align*}
We have
\begin{align*}
|\cX|\le \exp\left(\frac{C}{(\varepsilon\delta)^2}\right),
\end{align*}
for some absolute constant $C$.
\end{theorem}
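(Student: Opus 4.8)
The plan is to follow the outline in the introduction essentially verbatim, turning each heuristic into a quantitative lemma. First I would use the explicit formula (the analytic approximation $\psi(x)-x\approx x^{1/2}\sum_{|\gamma|\le X^{1/2}}x^{i\gamma}/(\tfrac12+i\gamma)$, with the tail and the contribution of $\gamma$ beyond $X^{1/2}$ controlled under RH by Schoenfeld-type bounds) to convert the hypothesis $|\psi(x)-x|\ge \varepsilon(\log x)^2 x^{1/2}$ for $x\in\cX$ into the statement that each such $x$ forces $\bigl|\sum_{0\le\gamma\le t}x^{i\gamma}\bigr|\ge \tfrac{\varepsilon}{2}N(t)$ for many $t\le X^{1/2}$; a pigeonhole step then fixes a single $t$ that works for a positive proportion $|\cX|/t^{o(1)}$ of the points of $\cX$. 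Next I would invoke Lemma~\ref{lem:concentration} to attach to each such $x$ a phase $\beta_x$ with
\begin{align*}
\bigl|\{0\le\gamma\le t:\|\tfrac{\log x}{2\pi}\gamma+\beta_x\|\le\delta'\}\bigr|\ge 2\delta'(1+\tfrac{\varepsilon}{8})N(t),
\end{align*}
at some scale $\delta'$ comparable to $\delta$ (the parameter $\delta$ in the theorem will be tied to the separation exponent and to this concentration scale). Then H\"older's inequality across the surviving $x$'s produces, for a free integer parameter $k$, a positive-density family of $k$-tuples $(x_1,\dots,x_k,\beta_1,\dots,\beta_k)$ satisfying~\eqref{eq:Bohr-2} with the amplified factor $(1+\tfrac{\varepsilon}{8})^k$.

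The heart of the argument is then the geometry-of-numbers input, i.e.\ a rigorous version of Lemma~\ref{lem:bohr-sequence}: for a generic $k$-tuple $(x_1,\dots,x_k)$ drawn from a well-separated subset of $[X,2X]$, the real solution set $\{y\in[0,t]:\|\tfrac{\log x_j}{2\pi}y+\beta_j\|\le\rho,\ 1\le j\le k\}$ has measure $\approx(2\rho)^k t$ \emph{uniformly} for $\rho$ in a range around $\delta'$, and moreover decomposes into $N\approx (2\delta')^{k-1}(\log X)\,t$ intervals each of length $\approx \delta'/\log X$. The genericity is where the separation hypothesis $X^{1-(1-2\delta)(2\pi\varepsilon)^{1/2}}$ enters: I would show that the transference theorem (Banaszczyk-type, or a Davenport--type lemma bounding integer points near a line) applies unless the vector $(\log x_1,\dots,\log x_k)$ lies in a thin set of near-resonant tuples, and that the number of bad tuples is negligible compared to the $\approx (|\cX|/t^{o(1)})^k$ tuples produced by H\"older, \emph{provided} $|\cX|$ is not too small — which is exactly the regime in which we are trying to derive a contradiction. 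Combining the short-interval count with the amplified inequality~\eqref{eq:Bohr-2}, we find $\gg (2\delta')^{k-1}(\log X)t$ intervals of length $\asymp\delta'/\log X$, a positive proportion of which contain $\gg (1+\tfrac{\varepsilon}{8})^k$ zeros of $\zeta$.

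Finally I would feed this into Montgomery's pair-correlation estimate: the number of pairs of ordinates within $O(1/\log X)$ of each other is $O(N(t))$, so the number of intervals of length $\asymp\delta'/\log X$ containing $\ge M$ zeros is $O(N(t)/M)$. Taking $M\asymp (1+\tfrac{\varepsilon}{8})^k$ and comparing with the lower bound on the number of such heavy intervals forces
\begin{align*}
(2\delta')^{k-1}(\log X)\,t \ll \frac{N(t)}{(1+\varepsilon/8)^k}\cdot(\text{correction factors}),
\end{align*}
and since $N(t)\asymp t\log t$, after absorbing the $o(1)$ losses this becomes impossible once $(1+\tfrac{\varepsilon}{8})^k$ exceeds a fixed power of $X$, i.e.\ once $k\gg (\log X)/\varepsilon$. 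Back-substituting the admissible size of $|\cX|$ needed to keep the count of generic tuples dominant through all of these steps — each of which costs a factor that is polynomial in $1/(\varepsilon\delta)$ per unit of $k$ — yields the bound $|\cX|\le\exp(C/(\varepsilon\delta)^2)$. The main obstacle, and the step I expect to require the most care, is making the geometry-of-numbers lemma genuinely uniform in $\rho$ while simultaneously controlling the measure of near-resonant tuples $(x_1,\dots,x_k)$; everything else is bookkeeping with the explicit formula, H\"older, and the pair-correlation bound.
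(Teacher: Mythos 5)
Your skeleton matches the paper's outline (explicit formula, pigeonhole in $t$, concentration into rank-one Bohr sets, H\"older amplification, a measure estimate for rank-$k$ Bohr sets, and a short-interval zero count), but two steps as you describe them have genuine gaps. First, the geometry-of-numbers step. You propose to prove $\mu(B(\tilde x,\beta,T;\rho))\approx(2\rho)^k T$ for ``generic'' tuples via a transference/Davenport-type theorem and to show the ``near-resonant'' tuples are negligible --- but you give no mechanism for counting the resonant tuples, and this is exactly the crux: the resonance condition is a Diophantine condition on $(\log x_1,\dots,\log x_k)$ that is hard to control tuple by tuple. The paper does not argue this way. It bounds the \emph{average} of $\max_\beta\mu(B(\tilde x,\beta,T;\rho))$ over \emph{all} tuples $\tilde x\in\cX_1^k$ at once (Lemma~\ref{lem:Bohrcard}), using a Vinogradov smooth majorant and Fourier expansion; the separation hypothesis enters only through the elementary count that for fixed $(m_1,\dots,m_k)\neq 0$ at most $O(|\cY|^{k-1})$ tuples satisfy $|m_1\alpha_1+\dots+m_k\alpha_k|\le 2\pi/T$. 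This yields the main term $(2\rho)^k(1+\eta)^kT|\cY|^k$ plus an error $(C\log((\rho\eta)^{-1}))^kT|\cY|^{k-1}$, with no need to isolate generic tuples. Relatedly, the paper applies Montgomery's pair correlation bound \emph{before} the Bohr-set analysis, to discard a sparse set $\cN_2$ of clustering ordinates and work with a regularized set $\cN_1$ satisfying $|\cN_1\cap\cI|\ll 1/(\varepsilon\delta)$ for all $|\cI|\le 1/\log T$ (Lemma~\ref{lem:partition}); your plan to invoke pair correlation only at the end, to bound the number of heavy intervals, leaves the maximum short-interval count uncontrolled in the step where you convert the Bohr-set cardinality into a measure.

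Second, and more seriously for the stated bound, your endgame is miscalibrated. You claim the contradiction arrives ``once $(1+\varepsilon/8)^k$ exceeds a fixed power of $X$, i.e.\ once $k\gg(\log X)/\varepsilon$.'' With that choice of $k$, the per-unit-of-$k$ losses you yourself acknowledge (each a power of $1/(\varepsilon\delta)$) would force $|\cX|\ge\exp\bigl(c\,(\log X)\log((\varepsilon\delta)^{-1})/\varepsilon\bigr)$, i.e.\ a power of $X$, which is incompatible with the $X$-independent conclusion $|\cX|\le\exp(C/(\varepsilon\delta)^2)$. In the paper all factors of $T$, $\log T$, $(2\rho)^k$ and $|\cX_1|^k$ cancel between the H\"older lower bound and the averaged Bohr-set upper bound, and the contradiction is the purely local comparison
\begin{align*}
\left(1+\frac{\varepsilon\delta}{1000}\right)^{k}\ll\frac{1}{\varepsilon^{7/2}\delta^{2}}\left(\left(1+\frac{\varepsilon\delta}{10000}\right)^{2k}+\frac{(C'\log((\varepsilon\delta)^{-1}))^{k}}{|\cX_1|\,\varepsilon^{k}}\right),
\end{align*}
which fails already for $k\asymp\log((\varepsilon\delta)^{-1})/(\varepsilon\delta)$ --- a choice independent of $X$ --- provided $|\cX_1|\ge\exp(C/(\varepsilon\delta)^2)$. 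You should rework the final comparison so that the amplified factor only needs to beat $(1+\eta)^{2k}$ times quantities polynomial in $1/(\varepsilon\delta)$, not a power of $X$.
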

Taking $\delta$ sufficiently small in Theorem~\ref{thm:main1} and using a dyadic decomposition, we obtain for any fixed $c'< \sqrt{2\pi }$
\begin{align}
\label{eq:xx1-zzzzzzzz}
\left|\left\{ x \le X \ : \ |\psi(x)-x|\ge \varepsilon X^{1/2}(\log{X})^{2}\right\}\right|\ll X^{1-c'\varepsilon^{1/2}}
\end{align}
which improves on Proposition~\ref{prop:largevalue}. 

\section{Bohr sets}
Given tuples of real numbers $\alpha=(\alpha_1,\dots,\alpha_k), \beta=(\beta_1,\dots,\beta_k)$ and a positive real numbers $\rho$, we define Bohr sets in the usual way
\begin{align}
\label{eq:BBBAT}
B(\alpha,\beta;\rho)=\{ x\in \R \ : \ \|\alpha_{\ell} x+\beta_\ell\|\le \rho, \quad 1\le \ell \le k\},
\end{align}
where $\|.\|$ denotes distance to the nearest integer.

We also consider truncated Bohr sets
\begin{align}
\label{eq:BaT}
B(\alpha,\beta,T;\rho)=\{ 0\le x \le T \ : \ \|\alpha_{\ell} x+\beta_\ell\|\le \rho, \quad 1\le \ell \le k\}.
\end{align}
We expect that 
\begin{align}
\label{eq:bbb}
\mu(B(\alpha,\beta,T;\rho))\approx(2\rho)^{k}T,
\end{align}
however this is not true in general. 

Our first result shows that~\eqref{eq:bbb} holds on average over $\alpha_1,\dots,\alpha_k$ satisfying suitable spacing conditions.
\begin{lemma}
\label{lem:Bohrcard}
Let $T\ge 1,C$ a sufficiently large constant, $\rho$ sufficiently small and $\cY\subseteq \R$
 a finite set satisfying 
\begin{align}
\label{eq:Yspacing}
|y-y'|\ge \frac{1}{T}, \quad \text{if} \quad y,y'\in \cY \quad \text{and} \quad y\neq y'.
\end{align}
For any $\eta>0$, we have 
\begin{align*}
&\sum_{(\alpha_1,\dots,\alpha_k)\in \cY^{k}}\max_{(\beta_1,\dots,\beta_k)\in \R^{k}}\mu(B(\alpha,\beta,T;\rho))\ll \\ & \quad \quad \quad \left( (2\rho)^{k}(1+\eta)^{k}+\frac{(C\log((\rho\eta)^{-1}))^{k}}{|\cY|}\right)T|\cY|^{k},
\end{align*}
 with implied constant independent of $k$.
\end{lemma}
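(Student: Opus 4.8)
The plan is to estimate the left-hand side by bounding, for each fixed tuple $(\alpha_1,\dots,\alpha_k)\in\cY^k$, the quantity $\max_{\beta}\mu(B(\alpha,\beta,T;\rho))$ through a counting argument involving lattice points near the line $y\mapsto(\alpha_1 y,\dots,\alpha_k y)$, and then to distinguish between "generic" tuples for which the heuristic $(2\rho)^k T$ essentially holds and a small exceptional set of tuples for which it may fail. First I would reduce the measure of $B(\alpha,\beta,T;\rho)$ to a point count: cover $[0,T]$ by $\approx T$ intervals of length $1$, and on each such interval the set $\{x:\|\alpha_\ell x+\beta_\ell\|\le\rho,\ 1\le\ell\le k\}$ has measure controlled by whether the corresponding short arc of the curve passes within $\rho$ of an integer point in all $k$ coordinates simultaneously; summing, $\mu(B(\alpha,\beta,T;\rho))$ is bounded by $(2\rho)^k$ times (number of relevant integer translates) plus a boundary term.

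The key mechanism for the averaging is that, after summing over $(\alpha_1,\dots,\alpha_k)\in\cY^k$, the spacing hypothesis~\eqref{eq:Yspacing} prevents the $\alpha$'s from conspiring. Concretely, I would open the indicator function $\|\alpha_\ell x+\beta_\ell\|\le\rho$ via a Fourier expansion (a Fejér-type kernel majorant of the indicator of $[-\rho,\rho]$ on $\R/\Z$), obtaining $\mu(B(\alpha,\beta,T;\rho))\le \int_0^T\prod_{\ell=1}^k\big(\sum_{|h_\ell|\le H}\widehat{\phi}(h_\ell)e(h_\ell(\alpha_\ell x+\beta_\ell))\big)dx$ for a suitable smooth majorant $\phi\ge\mathbf 1_{[-\rho,\rho]}$ with $\widehat\phi(0)\approx 2\rho(1+\eta)$ and $H\approx(\rho\eta)^{-1}$. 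Expanding the product and integrating in $x$ over $[0,T]$, the diagonal term $h_1=\dots=h_k=0$ contributes $\le(2\rho)^k(1+\eta)^k T$, which is the main term, while every off-diagonal term $(h_1,\dots,h_k)\ne\vec 0$ contributes $\ll\min(T,\|h_1\alpha_1+\dots+h_k\alpha_k\|^{-1})$ after taking the maximum over $\beta$ trivially (the $\beta$-dependence sits in a unimodular factor). Summing the off-diagonal contribution over $(\alpha_1,\dots,\alpha_k)\in\cY^k$ and using~\eqref{eq:Yspacing}: for fixed nonzero $(h_1,\dots,h_k)$ with $|h_\ell|\le H$, the number of $\alpha$-tuples for which $\|h_1\alpha_1+\dots+h_k\alpha_k\|$ is small is controlled because the $\alpha$'s are $1/T$-separated, so $\sum_{\alpha\in\cY^k}\min(T,\|h_1\alpha_1+\dots\|^{-1})\ll T|\cY|^{k-1}(\log T + \log|\cY|)$ or so; multiplying by the number $\ll(2H+1)^k\ll(C\log((\rho\eta)^{-1}))^k$ of choices of $(h_1,\dots,h_k)$ and dividing back by $|\cY|^k$ yields the stated second term $\frac{(C\log((\rho\eta)^{-1}))^k}{|\cY|}T|\cY|^k$.

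The step I expect to be the main obstacle is making the off-diagonal estimate uniform and clean after taking $\max_\beta$ \emph{inside} the sum over $\alpha$: one must check that the maximum over $\beta\in\R^k$ does not interact with the sum over $\alpha$ in a way that destroys the cancellation — this works because in the Fourier-expanded form the $\beta$ only appears as $e(h_1\beta_1+\dots+h_k\beta_k)$, a unimodular factor one can bound by $1$ term-by-term, so the bound becomes $\beta$-independent before summing over $\alpha$. A secondary technical point is controlling the innermost $\alpha$-sum $\sum_{\alpha_k\in\cY}\min(T,\|c+h_k\alpha_k\|^{-1})$ for a shift $c$ depending on the other variables: here the $1/T$-separation of $\cY$ together with $|h_k|\le H$ gives that the values $h_k\alpha_k \bmod 1$ are $|h_k|/T$-separated, hence at most $O(|h_k|)$ of them lie in any interval of length $1/T$, and the dyadic sum $\sum_{1/T\le\|h_k\alpha_k+c\|}\|h_k\alpha_k+c\|^{-1}$ telescopes to $\ll |h_k|\,|\cY|\log T$; one then has to absorb the $|h_k|$ factors into the constant $C$ in the exponent, which is harmless since $\sum_{|h|\le H}|h|\ll H^2\ll(\rho\eta)^{-2}$ and $(\,(\rho\eta)^{-2}\,)^k$ is of the claimed shape after adjusting $C$. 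Assembling the diagonal and off-diagonal bounds gives the lemma with an implied constant independent of $k$.
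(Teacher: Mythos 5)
Your overall strategy (a smooth Fourier majorant of the Bohr-set indicator, a diagonal/off-diagonal split, and the spacing hypothesis to kill the off-diagonal after averaging over $\alpha$) is the same as the paper's, but two steps as written do not go through. First, the off-diagonal control. For the integral $\int_0^T e(\theta x)\,dx$ the correct bound is $\min(T,|\theta|^{-1})$ with the genuine absolute value, not $\min(T,\|\theta\|^{-1})$; more seriously, your claim that the values $h_k\alpha_k \bmod 1$ are $|h_k|/T$-separated is false. The hypothesis~\eqref{eq:Yspacing} only separates $\cY$ inside $\R$ and does not confine $\cY$ to a bounded interval, so multiplication by $h_k$ followed by reduction mod $1$ can collapse all of $\cY$ onto a single residue class (e.g.\ $\cY=\{n/h_k\}$ with $1/|h_k|\ge 1/T$), making $\sum_{\alpha_k\in\cY}\min(T,\|c+h_k\alpha_k\|^{-1})$ as large as $T|\cY|$. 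The paper avoids this entirely by inserting a weight $f(t/T)$ with $\mathrm{supp}(\widehat f)\subseteq[-1,1]$ before expanding: then only frequency tuples with $|m_1\alpha_1+\dots+m_k\alpha_k|\le 2\pi/T$ (real absolute value) survive at all, and for fixed $m_\ell\neq 0$ and fixed remaining coordinates this confines $\alpha_\ell$ to a single real interval of length $O(1/T)$, so the $1/T$-spacing gives $O(1)$ admissible values — no dyadic summation and no $\log T$ or $\log|\cY|$ losses, which your version picks up and which are not present in (and cannot be absorbed into) the stated bound.

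Second, the final bookkeeping does not produce the claimed second term. With $H\approx(\rho\eta)^{-1}$ the number of frequency tuples is $(2H+1)^k\approx(\rho\eta)^{-k}$, which is not $\ll(C\log((\rho\eta)^{-1}))^k$ for an absolute $C$; likewise $((\rho\eta)^{-2})^k$ is not ``of the claimed shape after adjusting $C$'' — the lemma asserts a power of a logarithm, and $C$ must be absolute for the application in Theorem~\ref{thm:main1}, where $k$ is taken large in terms of $(\varepsilon\delta)^{-1}$. You must keep the Fourier coefficients of the majorant in the off-diagonal sum and exploit their decay $|a'_m|\ll\min(\rho,|m|^{-1},|m|^{-1}(r/(|m|\rho\eta))^r)$ from the Vinogradov construction (Lemma~\ref{lem:Vin}), which gives $\sum_{m}a'_m\ll\log((\rho\eta)^{-1})$; the $k$-th power of this one-dimensional sum, together with the $O(1)$ count of admissible $\alpha_\ell$ and the trivial sum over the remaining $|\cY|^{k-1}$ coordinates, is what yields the factor $(C\log((\rho\eta)^{-1}))^kT|\cY|^{k-1}$. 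Your diagonal term and the observation that the $\beta$-dependence is a unimodular factor are both fine.
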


\subsection{Smooth majorant for Bohr sets}
Our main tool to study Bohr sets is a smooth approximation to their indicator function. We first recall a construction of Vinogradov~\cite[Lemma~12; Chapter 1]{Vin}.
\begin{lemma}
\label{lem:Vin}
Let $r$ be a positive integer and $a,b,\Delta$ real numbers satisfying 
\begin{align*}
0<\Delta<\frac{1}{2}, \quad \Delta\le b-a\le 1-\Delta.
\end{align*}
There exists a periodic function $\Psi(x)$, with period $1$, satisfying 
\begin{align*}
&\Psi(x)=1 \quad \text{if} \quad a+\frac{\Delta}{2}\le x \le b -\frac{\Delta}{2}, \\ &\Psi(x)=0 \quad \text{if} \quad b+\frac{\Delta}{2}\le x \le 1+a-\frac{\Delta}{2}, \\
&0\le \Psi(x)\le 1 \quad \text{otherwise},
\end{align*}
with an expansion into Fourier series
\begin{align*}
\Psi(x)=\sum_{\substack{m\in \Z }}a_me(mx),
\end{align*}
where $a_m$ satisfies
\begin{align*}
a_0&=b-a,  \quad 
|a_m|\le 2(b-a), \quad
|a_m|\le \frac{2}{\pi |m|}, \quad  
|a_m|\le \frac{2}{\pi |m|}\left(\frac{r}{\pi |m| \Delta}\right)^r.
\end{align*}
\end{lemma}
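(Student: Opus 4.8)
The plan is to reproduce Vinogradov's construction: take the indicator of $[a,b]$ and smooth it by convolving $r$ times with a narrow box kernel. Concretely, set $\delta=\Delta/r$, let $\chi=\mathbf 1_{[a,b]}$ extended with period $1$ (legitimate since $b-a\le 1-\Delta<1$), and let $\phi=\delta^{-1}\mathbf 1_{[-\delta/2,\delta/2]}$, regarded as a function on $\R/\Z$. Define
$$\Psi=\chi*\underbrace{\phi*\cdots*\phi}_{r},$$
the convolution taken on $\R/\Z$; then $\Psi$ has period $1$. The $r$-fold kernel $\Phi:=\phi^{*r}$ is nonnegative, satisfies $\int_0^1\Phi=1$, and is supported on $[-r\delta/2,r\delta/2]=[-\Delta/2,\Delta/2]$.

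First I would verify the pointwise properties. Writing $\Psi(x)=\int\chi(x-u)\,\Phi(u)\,du$ exhibits $\Psi$ as an average of translates of $\chi$, so $0\le\chi\le 1$ gives $0\le\Psi\le 1$. If $x\in[a+\Delta/2,\,b-\Delta/2]$ then $x-u\in[a,b]$ for every $u$ in the support of $\Phi$, hence $\chi(x-u)=1$ there and $\Psi(x)=1$. If $x$ lies in the complementary interval $[b+\Delta/2,\,1+a-\Delta/2]$ — nonempty exactly because $b-a\le 1-\Delta$ — then as $u$ runs over $[-\Delta/2,\Delta/2]$ the point $x-u$ ranges over $[x-\Delta/2,\,x+\Delta/2]\subseteq[b,\,1+a]$, which meets $[a,b]$ modulo $1$ only at its two endpoints, so $\chi(x-u)=0$ for almost every $u$ and $\Psi(x)=0$. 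This wrap-around-and-endpoint bookkeeping is the fiddliest part of the argument, though it is only a routine interval computation.

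Finally I would read off the Fourier coefficients from the convolution theorem: $a_m=\widehat\chi(m)\,\widehat\phi(m)^{\,r}$, where $\widehat f(m)=\int_0^1 f(x)e(-mx)\,dx$. Here $\widehat\phi(0)=1$ and $\widehat\phi(m)=\sin(\pi m\delta)/(\pi m\delta)$ for $m\ne 0$, so $|\widehat\phi(m)|\le 1$ (from $|\sin t|\le|t|$) and also $|\widehat\phi(m)|\le 1/(\pi|m|\delta)=r/(\pi|m|\Delta)$ (from $|\sin t|\le1$); meanwhile $\widehat\chi(0)=b-a$ and, for $m\ne0$, $\widehat\chi(m)=(e(-ma)-e(-mb))/(2\pi i m)$, whence $|\widehat\chi(m)|\le 1/(\pi|m|)$ and trivially $|\widehat\chi(m)|\le b-a$. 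Combining these gives $a_0=b-a$; using $|\widehat\phi|\le1$ gives $|a_m|\le b-a\le 2(b-a)$ and $|a_m|\le 1/(\pi|m|)\le 2/(\pi|m|)$; and distributing a factor $r/(\pi|m|\Delta)$ into each of the $r$ box factors gives $|a_m|\le (1/(\pi|m|))(r/(\pi|m|\Delta))^r\le (2/(\pi|m|))(r/(\pi|m|\Delta))^r$. Every bound in the statement then follows (with a little room to spare), completing the construction. I do not anticipate a genuine obstacle here; the only care needed is in the support computation of the second paragraph.
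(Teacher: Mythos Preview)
Your construction is correct and is precisely Vinogradov's classical argument; the paper does not prove this lemma at all but simply cites it as \cite[Lemma~12; Chapter~1]{Vin}, so there is nothing to compare beyond noting that you have faithfully reproduced the original.
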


\begin{lemma}
\label{lem:Bohrma}
Let $\alpha=(\alpha_1,\dots,\alpha_k),(\beta_1,\dots,\beta_k)\in \R^{k}$ and suppose $\rho,\eta$ are sufficiently small.

 For any integer $r$, there exists a function $\Psi_{\alpha,\beta}$ satisfying
\begin{align}
\label{eq:psi01}
0\le \Psi_{\alpha,\beta}(x)\le 1,
\end{align}
\begin{align}
\label{eq:psi1}
\Psi_{\alpha,\beta}(x)=1 \quad \text{if} \quad x\in B(\alpha,\beta;\rho),
\end{align}
with expansion into a trigonometric series 
\begin{align}
\label{eq:PsiF}
\Psi_{\alpha,\beta}(x)=\sum_{m_1,\dots,m_k\in \Z}a^{(1)}_{m_1}\dots a^{(k)}_{m_k}e((\alpha_1m_1+\dots+\alpha_km_k)x),
\end{align}
where each $a^{(j)}_m$ satisfies 
\begin{align}
\label{eq:acoeffs}
a^{(j)}_0&=2\rho(1+\eta),  \quad 
|a^{(j)}_m|\le 4\rho(1+\eta), \quad \\ &
|a^{(j)}_m|\le \frac{2}{\pi |m|}, \quad   \nonumber
|a^{(j)}_m|\le \frac{2}{\pi |m|}\left(\frac{r}{2\pi |m| \rho\eta}\right)^r.
\end{align}
\end{lemma}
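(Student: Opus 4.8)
The plan is to build $\Psi_{\alpha,\beta}$ as a product of one-dimensional majorants, one for each linear form $\alpha_\ell x + \beta_\ell$, using the Vinogradov construction of Lemma~\ref{lem:Vin}. First I would fix, for each coordinate $\ell$, an interval $[a_\ell, b_\ell] \subset \R/\Z$ chosen so that the ``core'' region $a_\ell + \Delta/2 \le x \le b_\ell - \Delta/2$ contains the interval $(-\rho, \rho) \bmod 1$. Concretely, one takes the interval symmetric about $0$ of half-length $\rho + \Delta/2$, i.e. set $b_\ell - a_\ell = 2\rho + \Delta$ with $a_\ell = -(\rho + \Delta/2)$, $b_\ell = \rho + \Delta/2$, and choose the smoothing parameter $\Delta$ in terms of $\eta$ and $\rho$ so that $2\rho + \Delta = 2\rho(1+\eta)$, i.e. $\Delta = 2\rho\eta$. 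The hypothesis that $\rho,\eta$ are sufficiently small guarantees $0 < \Delta < 1/2$ and $\Delta \le b_\ell - a_\ell \le 1-\Delta$, so Lemma~\ref{lem:Vin} applies and produces a periodic $\Psi^{(\ell)}$ with Fourier coefficients $a^{(\ell)}_m$ satisfying $a^{(\ell)}_0 = b_\ell - a_\ell = 2\rho(1+\eta)$, $|a^{(\ell)}_m| \le 2(b_\ell-a_\ell) = 4\rho(1+\eta)$, $|a^{(\ell)}_m| \le 2/(\pi|m|)$, and $|a^{(\ell)}_m| \le \frac{2}{\pi|m|}\bigl(\frac{r}{\pi|m|\Delta}\bigr)^r = \frac{2}{\pi|m|}\bigl(\frac{r}{2\pi|m|\rho\eta}\bigr)^r$, which is exactly~\eqref{eq:acoeffs}.

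Next I would define
\[
\Psi_{\alpha,\beta}(x) = \prod_{\ell=1}^{k} \Psi^{(\ell)}(\alpha_\ell x + \beta_\ell).
\]
Since each factor lies in $[0,1]$ by the Vinogradov construction, the product does too, giving~\eqref{eq:psi01}. If $x \in B(\alpha,\beta;\rho)$ then $\|\alpha_\ell x + \beta_\ell\| \le \rho$ for every $\ell$, so $\alpha_\ell x + \beta_\ell$ lies (mod $1$) in $[-\rho,\rho] \subset [a_\ell + \Delta/2, b_\ell - \Delta/2]$, whence $\Psi^{(\ell)}(\alpha_\ell x + \beta_\ell) = 1$ and therefore $\Psi_{\alpha,\beta}(x) = 1$; this is~\eqref{eq:psi1}. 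Finally, expanding each factor $\Psi^{(\ell)}(\alpha_\ell x + \beta_\ell) = \sum_{m \in \Z} a^{(\ell)}_m e(m(\alpha_\ell x + \beta_\ell))$ and multiplying out, the phases $e(m_\ell \beta_\ell)$ can be absorbed into the coefficients (replacing $a^{(\ell)}_{m_\ell}$ by $a^{(\ell)}_{m_\ell} e(m_\ell\beta_\ell)$, which does not change any of the absolute-value bounds, and leaves $a^{(\ell)}_0$ unchanged since $e(0)=1$), and collecting the exponentials gives the expansion~\eqref{eq:PsiF} with coefficient of $e((\alpha_1 m_1 + \dots + \alpha_k m_k)x)$ equal to $a^{(1)}_{m_1}\cdots a^{(k)}_{m_k}$.

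There is essentially no hard step here: the lemma is a bookkeeping exercise in tensoring together one-dimensional majorants. The only points requiring a little care are (i) checking that the choice $\Delta = 2\rho\eta$ together with ``$\rho,\eta$ sufficiently small'' really does satisfy the constraints $0<\Delta<1/2$ and $\Delta \le b_\ell - a_\ell \le 1 - \Delta$ of Lemma~\ref{lem:Vin} — here $b_\ell - a_\ell = 2\rho(1+\eta)$, so we need $\rho(1+\eta)$ bounded away from $1/2$, which holds for small $\rho$ — and (ii) making sure the absorption of the $e(m_\ell\beta_\ell)$ factors is phrased so that the stated bounds on $|a^{(j)}_m|$ (which are claimed as bounds on the coefficients appearing in~\eqref{eq:PsiF}) remain literally true, which is immediate since $|e(m_\ell\beta_\ell)| = 1$. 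One should also note that convergence of the Fourier series (and hence of the $k$-fold product expansion) is absolute, since by the last bound in~\eqref{eq:acoeffs} with any fixed $r \ge 2$ the coefficients $a^{(\ell)}_m$ decay like $|m|^{-r-1}$, so rearrangement into~\eqref{eq:PsiF} is justified.
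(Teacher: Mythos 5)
Your construction is exactly the paper's: apply Lemma~\ref{lem:Vin} with $a=-\rho(1+\eta)$, $b=\rho(1+\eta)$, $\Delta=2\rho\eta$, and take the product $\prod_{\ell}\Psi(\alpha_\ell x+\beta_\ell)$, with the phases $e(m_\ell\beta_\ell)$ absorbed into the coefficients. The proposal is correct and follows the same route, with somewhat more care than the paper about the phase absorption and absolute convergence.
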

\begin{proof}
 Let $\Psi$ be as in Lemma~\ref{lem:Vin} with parameters 
\begin{align*}
a=-\rho(1+\eta), \quad b=\rho(1+\eta), \quad \Delta=2\rho\eta,
\end{align*}
 and define
\begin{align}
\label{eq:Psidef}
\Psi_{\alpha,\beta}(x)=\prod_{\ell=1}^{k}\Psi(\alpha_{\ell}x+\beta_{\ell}).
\end{align}
For any $x\in \R$
\begin{align*}
0\le \Psi_{\alpha,\beta}(x)\le 1.
\end{align*}
If $x$ satisfies 
\begin{align*}
\|\alpha_{\ell}x+\beta_{\ell}\|\le \rho, \quad 1\le \ell \le k,
\end{align*}
then 
\begin{align*}
\Psi(\alpha_{\ell}x+\beta_{\ell})=1, \quad 1\le \ell \le k, 
\end{align*}
and hence 
\begin{align*}
\Psi_{\alpha,\beta}(x)=1.
\end{align*}
This establishes~\eqref{eq:psi01} and~\eqref{eq:psi1}. Expanding each factor in~\eqref{eq:Psidef} into a Fourier series gives~\eqref{eq:PsiF} and~\eqref{eq:acoeffs}.
\end{proof}
\section{Proof of Lemma~\ref{lem:Bohrcard}}
Fix $\alpha=(\alpha_1,\dots,\alpha_k)\in \cY^{k}$ and $(\beta_1,\dots,\beta_k)\in \R^{k}$ and consider  $B(\alpha,\beta,T;\rho).$

Let $\Phi_{\alpha,\beta}$ be as in Lemma~\ref{lem:Bohrma} with $r=2$ and suppose $f$ is a positive smooth function satisfying 
$$f(x)\gg 1 \quad \text{if} \quad |x|\le 1,$$
and 
\begin{align}
\label{eq:fhathat}
\text{supp}(\widehat f)\subseteq [-1,1].
\end{align}
We have 
\begin{align*}
\mu(B(\alpha,\beta,T;\rho))\ll \int_{-\infty}^{\infty}f\left(\frac{t}{T}\right)\Phi_{\alpha,\beta}(t)dt.
\end{align*}
By~\eqref{eq:PsiF} and~\eqref{eq:fhathat}
\begin{align*}
\mu(B(\alpha,\beta,T;\rho))&\ll \sum_{m_1,\dots,m_k\in \Z}a^{(1)}_{m_1}\dots a^{(k)}_{m_k}\int_{-\infty}^{\infty}f\left(\frac{t}{T}\right)e((\alpha_1m_1+\dots+\alpha_km_k)t)dt \\ 
&\ll T\sum_{m_1,\dots,m_k\in \Z}a^{(1)}_{m_1}\dots a^{(k)}_{m_k}\widehat f((\alpha_1m_1+\dots+\alpha_km_k)T) \\ 
&\ll (2\rho)^{k}(1+\eta)^{k}T+T\sum_{\substack{m_1,\dots,m_k\in \Z \\ (m_1,\dots,m_k)\neq 0 \\ |m_1\alpha_1+\dots+m_k\alpha_k|\le 2\pi /T }}a'_{m_1}\dots a'_{m_k},
\end{align*}
where 
\begin{align}
\label{eq:a'def}
a'_m=\min \left\{ 4\rho(1+\eta),\frac{2}{\pi |m|}, \frac{2}{\pi |m|}\left(\frac{r}{\pi |m| \rho \eta}\right)^2 \right\}.
\end{align}
Summing the above over $(\alpha_1,\dots,\alpha_k)\in \cY^{k}$, we see that 
\begin{align*}
&\sum_{(\alpha_1,\dots,\alpha_k)\in \cY^{k}}\max_{(\beta_1,\dots,\beta_k)\in \R^{k}}\mu(B(\alpha,\beta,T;\rho))\ll  (2\rho)^{k}(1+\eta)^{k}T|\cY|^{k} \\ & \quad \quad \quad \quad +T\sum_{\substack{m_1,\dots,m_k\in \Z \\ (m_1,\dots,m_k)\neq 0 }}a'_{m_1}\dots a'_{m_k}\sum_{\substack{(\alpha_1,\dots,\alpha_k)\in \cY^{k} \\ |m_1\alpha_1+\dots+m_k\alpha_k|\le 2\pi /T}}1.
\end{align*}
Fix $(m_1,\dots,m_k)\neq 0$ and consider summation over $\alpha_1,\dots,\alpha_k$. Let $\ell$ satisfy $m_{\ell}\neq 0$. For each choice of $$\alpha_1,\dots,\alpha_{\ell-1},\alpha_{\ell+1},\dots,\alpha_{k}\in \cY,$$ there exists at most $O(1)$ values of $\alpha_{\ell}\in \cY$ satisfying 
\begin{align*}
|m_1\alpha_1+\dots+m_k\alpha_k|\le 2\pi /T.
\end{align*}
This implies 
\begin{align*}
\sum_{\substack{m_1,\dots,m_k\in \Z \\ (m_1,\dots,m_k)\neq 0 }}a'_{m_1}\dots a'_{m_k}\sum_{\substack{(\alpha_1,\dots,\alpha_k)\in \cY^{k} \\ |m_1\alpha_1+\dots+m_k\alpha_k|\le 2\pi /T}}1\ll |\cY|^{k-1}\left(\sum_{\substack{m\in \Z }}a'_m\right)^{k},
\end{align*}
and hence from~\eqref{eq:a'def}
\begin{align*}
&\sum_{(\alpha_1,\dots,\alpha_k)\in \cY^{k}}\max_{(\beta_1,\dots,\beta_k)\in \R^{k}}\mu(B(\alpha,\beta,T;\rho))\ll \\ & \quad \quad \quad \quad \left( (2\rho)^{k}(1+\eta)^{k}+\frac{(C\log((\rho\eta)^{-1}))^{k}}{|\cY|}\right)T|\cY|^{k},
\end{align*}
which completes the proof.

\section{Combinatorial decomposition}

\subsection{Constructing large exponential sums}
For $T>0$ define 
$$N(T)=|\{ 0\le \gamma \le T \ : \ \zeta(1/2+i\gamma)=0\}|,$$
and recall the Riemann-von Mangoldt formula~\cite[Corollary~14.2]{MV}
\begin{align}
\label{eq:rvm}
N(T)=\frac{(1+o(1))}{2\pi} T\log{T}.
\end{align}

\begin{lemma}
\label{lem:psi-to-gamma}
Let $X\le x \le 2X$ satisfy 
\begin{align}
\label{eq:psix1}
|\psi(x)-x|\ge \varepsilon x^{1/2}(\log{x})^2.
\end{align}
Let $\alpha, \beta$ satisfy 
\begin{align*}
0<\alpha,\beta<1.
\end{align*}
Define
$$\cT_{x}=\left\{ X^{\alpha (2\pi \varepsilon)^{1/2}}\le t \le (\log{X})X^{1/2} \ : \ \left|\sum_{ \gamma \le t}x^{i\gamma}\right|\ge 8\pi \varepsilon \beta N(t) \right\},$$
and 
\begin{align}
\label{eq:delta-def}
\delta(\alpha,\beta)=1-\alpha^2-\beta+o(1).
\end{align}
We have
\begin{align*}
\int_{\cT_{x}}\frac{1}{t}dt\ge 2\pi \varepsilon\delta(\alpha,\beta)\log{X}.
\end{align*}
\end{lemma}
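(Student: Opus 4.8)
The plan is to start from the explicit formula for $\psi(x)-x$ under RH. Writing
\[
\psi(x)-x = -x^{1/2}\sum_{\gamma}\frac{x^{i\gamma}}{\tfrac12+i\gamma} + O\!\left(\frac{x^{1/2}(\log x)^2}{\log X} + \log x\right),
\]
I would truncate the sum over zeros at height $U=(\log X)X^{1/2}$, with the truncation error being $O\big(x^{1/2}(\log x)^2/\log X\big)$ which is negligible against the right side of \eqref{eq:psix1} for large $X$. The key analytic object is then the partial sum
\[
S(t,x)=\sum_{\gamma\le t}x^{i\gamma},
\]
and the aim is to show that if $S(t,x)$ is small (i.e. $\le 8\pi\varepsilon\beta N(t)$) for all $t$ outside a set $\cT_x$ of small logarithmic measure, then $\psi(x)-x$ cannot be as large as \eqref{eq:psix1}. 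I would do this by partial summation: writing $\sum_{\gamma\le U}\frac{x^{i\gamma}}{1/2+i\gamma}$ as $\int_0^U \frac{dS(t,x)}{1/2+it}$ and integrating by parts, the contribution is controlled by $\int_0^U \frac{|S(t,x)|}{t^2}\,dt$ plus a boundary term $|S(U,x)|/U$.

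Next I would split this integral into $t\in\cT_x$ and $t\notin\cT_x$. On the complement, $|S(t,x)|\le 8\pi\varepsilon\beta N(t) \sim 4\varepsilon\beta\, t\log t$ by the Riemann–von Mangoldt formula \eqref{eq:rvm}, so $\int_{[0,U]\setminus\cT_x}\frac{|S(t,x)|}{t^2}\,dt \ll \varepsilon\beta (\log X)^2$. On $\cT_x$ I would use the trivial bound $|S(t,x)|\le N(t)\sim \frac{t\log t}{2\pi}$, so this part contributes $\ll \frac{1}{2\pi}\int_{\cT_x}\frac{\log t}{t}\,dt \ll (\log X)\int_{\cT_x}\frac{dt}{t}$, using $\log t\le \log U\sim \tfrac12\log X$. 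Also the range $0\le t \le X^{\alpha(2\pi\varepsilon)^{1/2}}$ (below the truncation point of $\cT_x$) contributes, via the trivial bound, $\ll \frac{1}{2\pi}\int_0^{X^{\alpha(2\pi\varepsilon)^{1/2}}}\frac{\log t}{t}\,dt$, and since $\int_0^V \frac{\log t}{t}dt$ diverges at $0$ this needs care — I would instead bound $|S(t,x)|\le \min(N(t),\,1/\|(\log x)/2\pi\|)$ or simply start the explicit-formula sum at the first zero $\gamma_1\approx 14$, so this piece is $\ll (\log X)\cdot \alpha(2\pi\varepsilon)^{1/2}\log X \cdot \tfrac12 = \pi\varepsilon\alpha^2\log X + \cdots$ after squaring out the $(\log V)^2/2$ factor. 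The boundary term $|S(U,x)|/U \le N(U)/U \ll \log X$ is lower order.

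Collecting: the full sum $\big|\sum_{\gamma\le U}\frac{x^{i\gamma}}{1/2+i\gamma}\big|$ is at most a constant times
\[
\pi\varepsilon\alpha^2(\log X)^2 \;+\; \varepsilon\beta(\log X)^2 \;+\; (\log X)\int_{\cT_x}\frac{dt}{t} \;+\; o((\log X)^2),
\]
and on the other hand it must be at least $\varepsilon(\log x)^2/(something)$ — here I need to track the exact constants so that the $x^{1/2}$ cancels and \eqref{eq:psix1} forces a genuine lower bound of $2\pi\varepsilon(\log X)^2$ on the left (the constant $2\pi$ and the division of $\varepsilon$ into the $\alpha^2$, $\beta$, and $\delta$ pieces is exactly the content of \eqref{eq:delta-def}). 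Rearranging gives $\int_{\cT_x}\frac{dt}{t}\ge 2\pi\varepsilon(1-\alpha^2-\beta+o(1))\log X = 2\pi\varepsilon\,\delta(\alpha,\beta)\log X$, as claimed.

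The main obstacle I anticipate is bookkeeping the constants precisely — the factor $8\pi$ in the definition of $\cT_x$, the $2\pi$ in \eqref{eq:rvm}, and the $\tfrac12$ from $\log U\sim\tfrac12\log X$ all have to conspire correctly for the coefficients of $\alpha^2$ and $\beta$ to come out to exactly $1$ relative to the $2\pi\varepsilon\log X$ normalization. A secondary technical point is justifying that the smooth truncation of the explicit formula (needed to make the zero-sum honestly finite and absolutely convergent) costs only $O(x^{1/2}(\log x)^2/\log X)$, which is standard but must be stated; one typically smooths over a window of multiplicative width $1+O(1/\log X)$ around $x$, matching the $\eta/\log X$ interval extension mentioned in the outline.
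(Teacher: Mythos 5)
Your proposal follows essentially the same route as the paper: truncate the explicit formula at $T=(\log X)X^{1/2}$, pass to $\int |S(t,x)|\,t^{-2}\,dt$ by partial summation, split into the low range, the complement of $\cT_x$, and $\cT_x$ itself, bound the first two via the Riemann--von Mangoldt formula and the defining inequality respectively, and convert $\int_{\cT_x}N(t)t^{-2}\,dt$ into $(\log X)\int_{\cT_x}t^{-1}\,dt$ using $\log t\le(1/2+o(1))\log X$. The only points you flag as open are routine: the paper starts the partial-summation integral at $t=1/2$ (where $N(t)=0$, so no divergence at the origin), invokes \cite[Theorem~12.5]{MV} directly with error $O(\log X)$ rather than a smoothed formula, and the constant bookkeeping works out exactly as you anticipate.
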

\begin{proof}
By~\cite[Theorem~12.5]{MV}
\begin{align*}
\psi(x)-x=x^{1/2}\sum_{|\gamma|\le T}\frac{x^{i\gamma}}{1/2+i\gamma}+O\left(\frac{x^{1/2}(\log{xT})^2}{T}\right).
\end{align*}
Taking 
$$T=(\log{X})X^{1/2},$$
 using the assumption~\eqref{eq:psix1} and partitioning summation over $\gamma$ depending on if $\gamma>0$ or not, we see that
\begin{align*}
\left|\sum_{0<\gamma \le (\log{X})X^{1/2}}\frac{x^{i\gamma}}{1/2+i\gamma}\right|\ge \frac{(1+o(1))\varepsilon}{2}(\log{X})^2.
\end{align*}
By partial summation 
\begin{align}
\label{eq:lb-1}
\int_{1/2}^{(\log{X})X^{1/2}}\frac{1}{t^2}\left|\sum_{0<\gamma \le t}x^{i\gamma}\right|dt\ge \frac{(1+o(1))\varepsilon}{2}(\log{X})^2.
\end{align}
By~\eqref{eq:rvm} we see that
\begin{align}
\label{eq:lb-2}
\int_{1/2}^{X^{\alpha (2\pi \varepsilon)^{1/2}}}\frac{1}{t^2}\left|\sum_{0<\gamma \le t}x^{i\gamma}\right|dt & \le \int_{1/2}^{X^{\alpha (2\pi \varepsilon)^{1/2}}}\frac{N(t)}{t^2}dt \nonumber \\ 
& \le \frac{(1+o(1))}{2\pi }\int_{1/2}^{X^{\alpha (2\pi \varepsilon)^{1/2}}}\frac{\log{t}}{t}dt \nonumber \\ 
&\le \frac{(1+o(1))}{2 }\alpha^2\varepsilon
\end{align}
and 
\begin{align}
\label{eq:lb-3}
\nonumber \int_{\substack{1/2\le t \le (\log{X})X^{1/2} \\ t\not\in \cT_x}}\frac{1}{t^2}\left|\sum_{0<\gamma \le t}x^{i\gamma}\right|dt&\le 8\pi \beta \varepsilon \int_{1/2}^{ (\log{X})X^{1/2} }\frac{N(t)}{t^2}dt  \\ 
& \le \frac{\beta \varepsilon}{2}(\log{X})^2.
\end{align}
Combining~\eqref{eq:lb-1},~\eqref{eq:lb-2} and~\eqref{eq:lb-3} gives
\begin{align*}
\int_{t\in \cT_{x}}\frac{N(t)}{t^2}dt\ge \int_{t\in \cT_{x}}\frac{1}{t^2}\left|\sum_{0<\gamma \le t}x^{i\gamma}\right|dt\ge \frac{\varepsilon(1-\alpha^2-\beta+o(1))}{2}(\log{X})^2.
\end{align*}
By~\eqref{eq:rvm} we have
\begin{align*}
\int_{t\in \cT_{x}}\frac{N(t)}{t^2}dt\le \frac{(1+o(1))\log{X}}{4\pi}\int_{t\in \cT_{x}}\frac{1}{t}dt,
\end{align*}
from which the result follows.
\end{proof}
We next find a value of $T$ such that the sums 
$$\left|\sum_{0<\gamma \le T}x^{i\gamma}\right|,$$
are large for many values of $x$.
\begin{lemma}
\label{lem:large-scale}
Let $\cX\subseteq [X,2X]$ satisfy 
\begin{align*}
|\psi(x)-x|\ge \varepsilon x^{1/2}(\log{x})^{2}, \quad x\in \cX,
\end{align*}
Let  $\alpha,\beta,\delta(\alpha,\beta)$ be as in Lemma~\ref{lem:psi-to-gamma} and suppose that 
\begin{align}
\label{eq:deltaconds}
\delta(\alpha,\beta)>0.
\end{align}
There exists 
\begin{align}
\label{eq:Tconds}
X^{\alpha(2\pi\varepsilon)^{1/2}}\le T \le (\log{X})X^{1/2},
\end{align}
and 
\begin{align}
\cX_0\subseteq \cX, \quad |\cX_0|\ge \varepsilon \delta(\alpha,\beta) |\cX|,
\end{align}
such that 
\begin{align*}
\left|\sum_{0<\gamma \le T}x^{i\gamma}\right|\ge 8\pi \beta \varepsilon N(T), \quad x\in \cX_0.
\end{align*}
\end{lemma}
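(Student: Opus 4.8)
The plan is to apply Lemma~\ref{lem:psi-to-gamma} to every $x\in\cX$ and then extract a single good value of $T$ by an averaging (pigeonhole) argument over the logarithmic scale. First I would fix $\alpha,\beta$ with $\delta(\alpha,\beta)>0$ as in the hypothesis, and for each $x\in\cX$ invoke Lemma~\ref{lem:psi-to-gamma}, which gives
\begin{align*}
\int_{\cT_x}\frac{dt}{t}\ge 2\pi\varepsilon\,\delta(\alpha,\beta)\log X,
\end{align*}
where $\cT_x$ consists of those $t\in[X^{\alpha(2\pi\varepsilon)^{1/2}},(\log X)X^{1/2}]$ for which $\bigl|\sum_{\gamma\le t}x^{i\gamma}\bigr|\ge 8\pi\varepsilon\beta N(t)$.

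Next I would introduce the logarithmic measure $d\lambda(t)=dt/t$ on the interval $J=[X^{\alpha(2\pi\varepsilon)^{1/2}},(\log X)X^{1/2}]$, whose total mass is $\lambda(J)=\bigl(\tfrac12-\alpha(2\pi\varepsilon)^{1/2}\bigr)\log X+\log\log X=(1+o(1))\tfrac12\log X$ — in particular at most $\log X$ for $X$ large. Summing the displayed lower bound over $x\in\cX$ and exchanging the order of summation and integration yields
\begin{align*}
\int_{J}|\{x\in\cX:\ t\in\cT_x\}|\,\frac{dt}{t}\;=\;\sum_{x\in\cX}\int_{\cT_x}\frac{dt}{t}\;\ge\;2\pi\varepsilon\,\delta(\alpha,\beta)(\log X)\,|\cX|.
\end{align*}
Since the total $\lambda$-mass of $J$ is at most $\log X$, there must exist some $T\in J$ with $|\{x\in\cX:\ T\in\cT_x\}|\ge 2\pi\varepsilon\,\delta(\alpha,\beta)|\cX|$. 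Setting $\cX_0=\{x\in\cX:\ T\in\cT_x\}$, every $x\in\cX_0$ satisfies $\bigl|\sum_{0<\gamma\le T}x^{i\gamma}\bigr|\ge 8\pi\beta\varepsilon N(T)$ by definition of $\cT_x$, and $T$ lies in the required range~\eqref{eq:Tconds}. This gives $|\cX_0|\ge \varepsilon\delta(\alpha,\beta)|\cX|$ once one absorbs the harmless factor $2\pi$ (indeed the bound $2\pi\varepsilon\delta|\cX|$ is stronger than claimed).

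The only mild technical point is that the pigeonhole step is over a continuous parameter $t$, so it should be phrased as: if $|\{x\in\cX:\ t\in\cT_x\}|<2\pi\varepsilon\delta(\alpha,\beta)|\cX|$ for \emph{every} $t\in J$, then integrating against $dt/t$ contradicts the displayed lower bound since $\lambda(J)\le\log X$. One also needs measurability of $t\mapsto|\{x\in\cX:\ t\in\cT_x\}|$, which is immediate as $\cX$ is finite and each $\cT_x$ is a measurable (indeed finite union of intervals, by continuity of the partial-sum step function in $t$) subset of $J$. I do not anticipate a real obstacle here; the content was already packaged into Lemma~\ref{lem:psi-to-gamma}, and this lemma is just the passage from a per-$x$ average to a uniform choice of scale.
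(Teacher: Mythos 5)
Your argument is correct and is essentially the paper's own proof: apply Lemma~\ref{lem:psi-to-gamma} to each $x\in\cX$, sum and interchange to bound $\int_J|\{x:t\in\cT_x\}|\,dt/t$ from below, and pigeonhole over $t$ using that the logarithmic measure of $J$ is $O(\log X)$. The paper even obtains the same stronger constant $2\pi\varepsilon\delta(\alpha,\beta)$ before weakening it to the stated $\varepsilon\delta(\alpha,\beta)$.
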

\begin{proof}
By Lemma~\ref{lem:psi-to-gamma}
\begin{align}
\label{eq:aaaaaaaaaaaaaaa569594356}
& \int_{X^{\alpha(2\pi \varepsilon)^{1/2}}\le t \le (\log{X})X^{1/2}}\frac{|\{x\in \cX \ : \ t\in \cT_{x}\}|}{t}dt \\ & \quad \quad \quad \ge \sum_{x\in \cX}\int_{t\in \cT_{x}}\frac{1}{t}dt  \ge 2\pi \varepsilon \delta(\alpha,\beta)|\cX|\log{X}, \nonumber
\end{align}
which after taking a maximum over $X^{\alpha(2\pi \varepsilon)^{1/2}}\le t \le (\log{X})X^{1/2}$ implies 
\begin{align*}
&\max_{X^{\alpha (2\pi \varepsilon)^{1/2}}\le t \le (\log{X})X^{1/2}}|\{x\in \cX \ : \ t\in \cT_{x}\}|\int_{X^{\alpha(2\pi \varepsilon)^{1/2}}\le t \le (\log{X})X^{1/2}}\frac{1}{t}dt \\ & \quad \quad \quad  \ge \varepsilon \pi \delta(\alpha,\beta)|\cX|\log{X},
\end{align*}
and we obtain the desired result.
\end{proof}
Our next result gives a partition the zeros of $\zeta$ into two sets, one regular and one with small cardinality.
\begin{lemma}
\label{lem:partition}
Let
$$\cN=\{ \gamma \ : \ \zeta(1/2+i\gamma)=0  \},$$
with $\gamma$ counted according to multiplicity. For each $K\ge 1$ there exists a disjoint partition 
$$\cN=\cN_1\bigcup \cN_2,$$
such that 
\begin{align}
\label{eq:N2}
|\cN_2\cap [0,T]|\le \frac{1}{200 K}N(T),
\end{align}
and for each interval $\cI\subseteq [0,T]$ satisfying 
$$|\cI|\le \frac{1}{\log{T}},$$
we have 
\begin{align}
\label{eq:N1}
|\cN_1\cap \cI|\ll K.
\end{align}
\end{lemma}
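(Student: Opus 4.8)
\textbf{Proof proposal for Lemma~\ref{lem:partition}.}

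The plan is to build $\cN_2$ greedily as the union of zeros lying in a carefully chosen collection of short intervals of length $1/\log T$, discarding exactly those intervals that are ``too crowded''. Fix a threshold $M$, a large multiple of $K$ to be chosen later (I expect $M \asymp K$ will suffice, with the implied constant in~\eqref{eq:N1} absorbing the ratio). Partition $[0,T]$ into consecutive blocks $J_1, J_2, \dots$ each of length exactly $1/\log T$ (the last one possibly shorter). Call a block \emph{heavy} if it contains more than $M$ zeros $\gamma$ (counted with multiplicity), and \emph{light} otherwise. Put into $\cN_2$ all zeros lying in heavy blocks, and let $\cN_1$ be the rest. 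This is manifestly a disjoint partition of $\cN$.

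First I would verify the density bound~\eqref{eq:N2}. The total number of zeros in $[0,T]$ is $N(T) = (1+o(1))\frac{1}{2\pi} T \log T$ by~\eqref{eq:rvm}, so the number of blocks is $(1+o(1))T\log T$, hence on average a block holds $\approx \frac{1}{2\pi}$ zeros. A heavy block holds more than $M$ zeros, so the number $H$ of heavy blocks satisfies $H \cdot M \le N(T)$, giving $H \le N(T)/M$; therefore the number of zeros placed in $\cN_2$ over $[0,T]$ is at most (number of zeros in heavy blocks). Here I need a uniform upper bound for the number of zeros in a \emph{single} block of length $1/\log T$: by the Riemann--von Mangoldt formula applied at the two endpoints, $|\cN \cap J| \le N(b) - N(a) \ll (b-a)\log T + \log T \ll \log T$ uniformly, but this is far too weak. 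So instead I bound directly: the number of $\cN_2$-zeros in $[0,T]$ equals $\sum_{J \text{ heavy}} |\cN \cap J| \le \sum_{J}\mathbf{1}[|\cN\cap J| > M]\,|\cN\cap J|$, and since on each heavy block the count exceeds $M$ we may crucially use that the \emph{total} count is $N(T)$: no more than $N(T)/M$ blocks can be heavy, but each such block could still be large. The clean way is: $\#\{\cN_2\text{-zeros in }[0,T]\} = \sum_{J\text{ heavy}}|\cN\cap J|$, and I would like this $\le N(T)/(200K)$. This does \emph{not} follow merely from $H \le N(T)/M$ unless we also know no block is enormous. To control this I would invoke Montgomery's pair correlation work (as flagged in the paper's outline, ``the average density of zeros in intervals of length~\eqref{eq:short-length} is $O(1)$''), which gives $\sum_J |\cN\cap J|^2 \ll N(T)$ for blocks of length $\asymp 1/\log T$; then by Cauchy--Schwarz or directly $\sum_{J\text{ heavy}}|\cN\cap J| \le \frac{1}{M}\sum_{J\text{ heavy}}|\cN\cap J|^2 \le \frac{1}{M}\sum_J |\cN\cap J|^2 \ll \frac{N(T)}{M}$, and choosing $M = cK$ with $c$ large enough yields~\eqref{eq:N2}.

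Next, the regularity bound~\eqref{eq:N1}: let $\cI \subseteq [0,T]$ with $|\cI| \le 1/\log T$. Then $\cI$ meets at most two consecutive blocks $J_i, J_{i+1}$. Zeros of $\cN_1$ are by construction only in light blocks, each holding at most $M$ zeros, so $|\cN_1 \cap \cI| \le |\cN_1\cap J_i| + |\cN_1 \cap J_{i+1}| \le 2M \ll K$, as required. This step is essentially automatic once the greedy construction is set up with fixed blocks (the ``at most two blocks'' trick handles arbitrary placement of $\cI$).

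\textbf{Main obstacle.} The crux is the first bound~\eqref{eq:N2}: a naive greedy argument only controls the \emph{number} of heavy blocks, not the number of zeros they contain, because the Riemann--von Mangoldt formula alone permits a block of length $1/\log T$ to contain up to $\gg \log T$ zeros, which would be catastrophic. The resolution must come from a second-moment (pair-correlation type) input showing $\sum_J |\cN \cap J|^2 \ll N(T)$ over length-$(1/\log T)$ blocks; this is exactly the unconditional consequence of Montgomery's method under RH that the paper alludes to, and I would cite it (e.g.\ via the standard bound $\int_0^T (N(t+h)-N(t))^2\,dt \ll T h \log^2 T$ for $h \asymp 1/\log T$, which under RH follows from Montgomery's explicit formula / Gallagher's work). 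Everything else is bookkeeping with~\eqref{eq:rvm}.
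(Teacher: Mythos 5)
Your proposal is correct and follows essentially the same route as the paper: partition $[0,T]$ into blocks of length $1/\log T$, place zeros of overcrowded blocks into $\cN_2$, bound $|\cN_2|$ via the second moment $\sum_J|\cN\cap J|^2$ controlled by Montgomery's pair correlation bound (the paper cites \cite[Corollary~1]{Mont} for $|\{0\le\gamma,\gamma'\le T:|\gamma-\gamma'|\le 2/\log T\}|\ll T\log T$), and get~\eqref{eq:N1} from the two-block overlap observation. The ``main obstacle'' you flag is precisely the step the paper resolves the same way.
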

\begin{proof}
Let $\cJ$ be a disjoint partition of $[0,T]$ into intervals of length $1/\log{T}$, 
so that 
$$\cN=\bigcup_{\cI\in \cJ}\cN\cap \cI.$$
Define 
$$\cJ_1=\left\{ \cI \ : \ |\cN\cap \cI|\le CK \right\}, \quad \cJ_2=\left\{ \cI \ : \ |\cN\cap \cI|> CK \right\}$$
and 
$$\cN_1=\bigcup_{\cI\in \cJ_1}\cN\cap \cI, \quad \cN_2=\bigcup_{\cI\in \cJ_2}\cN\cap \cI.$$
The set$\cN_1$ satisfies~\eqref{eq:N1} by construction. 

It remains to establish~\eqref{eq:N2}.
We start by observing that 
\begin{align*}
CK|\cN_2|\le \sum_{\cI\in \cJ_2}|\cN\cap \cI|^2\le \left|\left\{ 0\le \gamma,\gamma'\le T \ :  |\gamma-\gamma'|\le \frac{2}{\log{T}} \right\}\right|.
\end{align*}
It follows from~\cite[Corollary~1]{Mont} that
\begin{align*}
\left|\left\{ 0\le \gamma,\gamma'\le T \ :  |\gamma-\gamma'|\le \frac{2}{\log{T}} \right\}\right|\ll T\log{T}.
\end{align*}
Combining the above with~\eqref{eq:rvm} and choosing $C$ suitably gives
\begin{align*}
|\cN_2|\le \frac{1}{200 K}N(T),
\end{align*}
which establishes the desired result.
\end{proof}

\subsection{Concentration into Bohr sets}
We next concentrate $\gamma$'s into Bohr sets via large exponential sums. Results of this sort are well known.
\begin{lemma}
\label{lem:concentration}
Let $T\gg 1$ and 
$x_1,\dots,x_N \in [0,T]$. Suppose $\alpha$ satisfies 
\begin{align}
\label{eq:alphaconds}
\alpha T\ge \frac{2}{\delta}
\end{align}
and
\begin{align}
\label{eq:largesum}
\left|\sum_{j=1}^{N}e(\alpha x_j)\right|\ge \delta N.
\end{align}
Let $\varepsilon>0$ satisfy 
\begin{align}
\label{eq:varepsilonconds}
\varepsilon<\frac{\delta}{C},
\end{align}
for an absolute constant $C$. There exists some $0\le \beta < 1$ such that 
\begin{align}
\label{eq:Bohrdensity}
|\{ 1\le j \le N \ : \ \|\alpha x_j+\beta\|\le \varepsilon\}|\ge 2\varepsilon(1+\frac{\delta}{16})N.
\end{align}
\end{lemma}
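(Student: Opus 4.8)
The plan is to deduce \eqref{eq:Bohrdensity} from the large exponential sum hypothesis \eqref{eq:largesum} via a standard smoothing argument, using a nonnegative trigonometric polynomial that majorizes the indicator of a short interval around $0$ modulo $1$. First I would fix a small $\varepsilon$ as in \eqref{eq:varepsilonconds} and let $\Psi$ be a periodic function of period $1$ with a finite Fourier expansion $\Psi(u)=\sum_{|m|\le M}c_m e(mu)$ such that $\Psi\ge \mathbf{1}_{[-\varepsilon,\varepsilon]}$ (as a function on $\R/\Z$) and $c_0\le 2\varepsilon(1+\delta/32)$, say; the Fejér kernel or the Vinogradov-type construction of Lemma~\ref{lem:Vin} both supply such a $\Psi$ with $M\ll 1/\varepsilon$. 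The point of the smoothing is that $\sum_{j}\Psi(\alpha x_j+\beta)$ expands into a main term $c_0 N$ plus oscillatory terms $c_m e(m\beta)\sum_j e(m\alpha x_j)$.

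Next I would average over $\beta$. The key observation is that $\int_0^1 \Psi(\alpha x_j+\beta)\,d\beta = c_0$, which gives no lower bound on its own, so instead one weights by a second nonnegative kernel built from the large sum: consider $\sum_j \Psi(\alpha x_j+\beta)$ and observe that choosing $\beta$ to align the phase of the dominant frequency $m=1$ (or $m=-1$) with $\sum_j e(\alpha x_j)$ forces the $m=\pm1$ terms to contribute $\ge 2|c_1|\,\delta N \gg \delta N$ once $|c_1|$ is bounded below (for the Fejér kernel $|c_1|$ is a fixed positive multiple of $\varepsilon$). The higher-frequency terms $|m|\ge 2$ are controlled crudely: each satisfies $\left|\sum_j e(m\alpha x_j)\right|\le N$ trivially, and there are only $O(1/\varepsilon)$ of them with $|c_m|\ll \varepsilon/|m|$, so their total contribution is $O(\varepsilon \log(1/\varepsilon)\, N)$, which is $\le (\delta/64)N$ by \eqref{eq:varepsilonconds} after absorbing the logarithm — this is exactly why one needs $\varepsilon$ smaller than $\delta$ by more than a constant factor, and one should use the stronger decay $|c_m|\ll \varepsilon/|m|^2$ (available from the $r=1$ case of Lemma~\ref{lem:Vin} or from Fejér) to kill the logarithm cleanly. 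Putting this together, for the aligning value of $\beta$,
\[
\sum_{j=1}^N \Psi(\alpha x_j+\beta)\ \ge\ c_0 N + c_1\,\delta N - \frac{\delta}{64}N\ \ge\ 2\varepsilon\Big(1+\frac{\delta}{16}\Big)N,
\]
after choosing the free constants in the construction of $\Psi$ so that $c_0$ is close to $2\varepsilon$ and the $m=\pm1$ gain dominates the truncation loss. Since $\Psi\le \mathbf{1}_{\{\|u\|\le \varepsilon\}}$ fails in general, one in fact wants $\Psi$ supported (as a majorant) where we can still conclude: more precisely one takes $\Psi$ with $\Psi(u)\le 1$ everywhere and $\Psi(u)=0$ for $\|u\|>\varepsilon$, so that $\sum_j \Psi(\alpha x_j+\beta)\le |\{j: \|\alpha x_j+\beta\|\le \varepsilon\}|$, giving \eqref{eq:Bohrdensity}.

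The condition \eqref{eq:alphaconds}, $\alpha T\ge 2/\delta$, enters to guarantee there is no parity/degeneracy obstruction: since each $x_j\in[0,T]$, the phases $\alpha x_j$ range over an interval of length $\alpha T$, and one needs this to be large enough (relative to $1/\delta$, the relevant scale) so that the trivial alignment argument is not spoiled by all the $x_j$ clustering within a single period — in the regime $\alpha T\ge 2/\delta$ the frequency $m=\pm 1$ is genuinely ``resolved'' at scale $\varepsilon$. I expect the main obstacle to be bookkeeping the constants: one must choose the Vinogradov/Fejér parameters, the frequency cutoff $M\asymp 1/\varepsilon$, and the absolute constant $C$ in \eqref{eq:varepsilonconds} compatibly so that the genuine gain $c_1\delta N$ (of exact size a constant times $\varepsilon\delta N$) beats both the truncation error and the slack needed to pass from $c_0\approx 2\varepsilon$ to the target $2\varepsilon(1+\delta/16)$. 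This is entirely routine once the decay estimate $|c_m|\ll \varepsilon/|m|^2$ is in hand, and no genuinely new idea is required beyond the standard ``large exponential sum $\Rightarrow$ concentration in a Bohr set'' principle.
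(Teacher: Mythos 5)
There is a genuine gap, and it is in the step you yourself identify as the main risk: the treatment of the frequencies $|m|\ge 2$. Your claimed decay $|c_m|\ll \varepsilon/|m|^2$ cannot hold for any minorant of the kind you need. If $0\le \Psi\le 1$ vanishes off an arc of length $2\varepsilon$ and has $c_0=\int\Psi\approx 2\varepsilon$, then for every $|m|\lesssim 1/\varepsilon$ the phase $e(-mu)$ is essentially constant on the support of $\Psi$, so $|c_m|\asymp \varepsilon$ for all such $m$; this is also what Lemma~\ref{lem:Vin} actually gives, namely $|a_m|\le\min\bigl(2(b-a),\,2/(\pi|m|)\bigr)$, a minimum and not a product. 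Consequently $\sum_{|m|\ge 2}|c_m|\asymp \varepsilon\cdot(1/\varepsilon)=\Theta(1)$, and bounding $\bigl|\sum_j e(m\alpha x_j)\bigr|\le N$ trivially produces an error term of size $\Theta(N)$. This swamps the gain $2|c_1|\,\delta N\asymp \varepsilon\delta N$ obtained by aligning the $m=\pm 1$ phase, so the inequality you display at the end of your second paragraph does not follow. This is not a bookkeeping issue: after you pick the single $\beta$ that aligns $m=\pm1$, you have no control whatsoever on the signs of the remaining $O(1/\varepsilon)$ terms of size $\varepsilon N$ each, and no choice of Fej\'er/Vinogradov/Selberg parameters repairs this, because the obstruction is the uncertainty principle for functions concentrated on an arc of length $2\varepsilon$.

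The paper's proof avoids Fourier expansion of the arc entirely. It forms the balanced measure
$f=\frac{1}{\gamma}\bigl(\sum_n 1_{[x_n,x_n+\gamma]}-\frac{\gamma N}{T}1_{[0,T]}\bigr)$ (point masses minus the uniform background of equal total mass), uses \eqref{eq:alphaconds} to show the background contributes at most $\frac{\delta}{4}N$ to $\widehat f(\alpha)$ so that $|\widehat f(\alpha)|\ge\frac{\delta}{2}N$, and then writes
$2\varepsilon\,\widehat f(\alpha)=\int_0^1\bigl(\int_{\|\alpha x+\beta\|\le\varepsilon}f(x)e(\alpha x)\,dx\bigr)d\beta$ by interchanging the order of integration. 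Since $e(\alpha x)$ is constant up to $O(\varepsilon)$ on each arc $\{\|\alpha x+\beta\|\le\varepsilon\}$, the phase can be removed at cost $O(\varepsilon^2N)$, giving $\int_0^1\bigl|\int_{B_\beta}f\bigr|\,d\beta\ge 2\varepsilon\frac{\delta}{4}N$ where $B_\beta$ denotes the arc preimage. Finally, because $\int_0^1\int_{B_\beta}f\,dx\,d\beta=0$, the identity $|u|+u\ge 0$ produces a single $\beta$ with $\int_{B_\beta}f\ge 2\varepsilon\frac{\delta}{8}N$, which upon unwinding $f$ is exactly the density increment \eqref{eq:Bohrdensity}. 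The mean-zero normalisation plus the $L^1$ average over \emph{all} $\beta$ is what replaces your attempt to control infinitely many frequencies at one fixed $\beta$; if you want to salvage a majorant-based write-up you would need to reproduce this averaging step rather than align a single frequency.
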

\begin{proof}
Let $\gamma$ be sufficiently small and define the function
\begin{align}
\label{eq:fdef}
f(x)=\frac{1}{\gamma}\left(\sum_{n=1}^{N}1_{\gamma,x_n}(x)-\frac{\gamma N}{T}1_{[0,T]}(x)\right),
\end{align}
where $1_{\gamma,x_n}$ denotes the indicator function of the interval $[x_n,x_n+\gamma]$ and $1_{[0,T]}$ denotes the indicator function of the interval $[0,T]$. We have  
\begin{align}
\label{eq:hatf}
\widehat f(\alpha)&=\int_{0}^{T}f(x)e(\alpha x)dx=\sum_{n=1}^{N}\frac{1}{\gamma}\int_{x_n}^{x_n+\gamma}e(\alpha x)dx-\frac{N}{T}\int_{0}^{T}e(\alpha x)dx.
\end{align}
As $\gamma$ tends to zero
\begin{align*}
\frac{1}{\gamma}\int_{x_n}^{x_n+\gamma}e(\alpha x)dx=(1+o(1))e(\alpha x_n),
\end{align*}
which implies 
\begin{align}
\label{eq:hatf1}
\left|\sum_{n=1}^{N}\frac{1}{\gamma}\int_{x_n}^{x_n+\gamma}e(\alpha x)dx\right|\ge \frac{3\delta}{4}N,
\end{align}
after taking $\gamma$ sufficiently small. Using
\begin{align*}
\left|\int_{0}^{T}e(\alpha x)dx\right|\le \frac{1}{\pi \alpha},
\end{align*}
we see that~\eqref{eq:alphaconds} implies
\begin{align*}
\frac{N}{T}\left|\int_{0}^{T}e(\alpha x)dx\right|\le \frac{\delta}{4}N.
\end{align*}
Combining the above with~\eqref{eq:hatf} and~\eqref{eq:hatf1} shows that
\begin{align}
\label{eq:largef1}
\left|\widehat f(\alpha)\right|\ge \frac{\delta}{2}N.
\end{align}
Consider 
\begin{align*}
S=\int_{0}^{1}\left(\int_{\substack{0\le x \le T \\ \|\alpha x+\beta\|\le \varepsilon}}f(x)e(\alpha x)dx\right)d\beta.
\end{align*}
Interchanging the order of integration gives
\begin{align*}
S=\int_{\substack{0\le x \le T}}f(x)e(\alpha x)\int_{\substack{0\le \beta \le 1 \\ \|\alpha x+\beta\|\le \varepsilon}}1 d\beta dx=2\varepsilon \widehat f(\alpha).
\end{align*}
Using~\eqref{eq:largef1}, this implies that 
\begin{align}
\label{eq:SLB}
|S|\ge (2\varepsilon)\frac{\delta}{2}N.
\end{align}
Since the phase in integration over $x$ is roughly constant on each fixed $\beta$, we have 
\begin{align*}
|S|&\le \int_{0}^{1}\left|\int_{\substack{0\le x \le T \\ \|\alpha x+\beta\|\le \varepsilon}}f(x)e(\alpha x+\beta)dx \right|d\beta \\ 
&=\int_{0}^{1}\left|\int_{\substack{0\le x \le T \\ \|\alpha x+\beta\|\le \varepsilon}}f(x)dx\right|d\beta+O\left(\varepsilon \int_{0}^{1}\int_{\substack{0\le x \le T \\ \|\alpha x+\beta\|\le \varepsilon}}|f(x)|dxd\beta \right) \\ 
&=\int_{0}^{1}\left|\int_{\substack{0\le x \le T \\ \|\alpha x+\beta\|\le \varepsilon}}f(x)dx\right|d\beta+O\left(\varepsilon^2N\right).
\end{align*}
Using~\eqref{eq:varepsilonconds},~\eqref{eq:SLB} and the fact that 
\begin{align*}
\int_{0}^{1}\int_{\substack{0\le x \le T \\ \|\alpha x+\beta\|\le \varepsilon}}f(x)dxd\beta=0,
\end{align*}
the above implies 
\begin{align*}
\int_{0}^{1}\left|\int_{\substack{0\le x \le T \\ \|\alpha x+\beta\|\le \varepsilon}}f(x)dx\right|+\left(\int_{\substack{0\le x \le T \\ \|\alpha x+\beta\|\le \varepsilon}}f(x)dx\right)d\beta\ge (2\varepsilon)\frac{\delta}{4}N.
\end{align*}
Hence there exists some $0\le \beta <1$ such that 
\begin{align*}
\int_{\substack{0\le x \le T \\ \|\alpha x+\beta\|\le \varepsilon}}f(x)dx\ge (2\varepsilon)\frac{\delta}{8}N.
\end{align*}
Recalling~\eqref{eq:fdef} and letting $\gamma$ tend to zero
\begin{align*}
&|\{ 1\le j \le N \ : \ \|\alpha x_j+\beta\|\le \varepsilon\}| \\ & \quad \quad \quad \ge \frac{N}{T}\mu(\{ 0\le t \le T \ : \ \|\alpha t+\beta\|\le \varepsilon\})+(2\varepsilon)\frac{\delta}{8}N,
\end{align*}
from which the result follows, since 
\begin{align*}
\mu(\{ 0\le t \le T \ : \ \|\alpha t+\beta\|\le \varepsilon\})=2\varepsilon (T+O(1)).
\end{align*}
\end{proof}
Summarising our progress thus far, we have:
\begin{lemma}
\label{prop:main-reduction}
Let $\cY\subseteq [X,2X]$ be a finite set satisfying
$$ |\psi(x)-x|\ge \varepsilon x^{1/2}(\log{x})^2 \quad x\in \cY.$$
Let $\delta(\alpha,\beta)$ be given by~\eqref{eq:delta-def} and suppose $0<\alpha,\beta<1$ satisfy 
\begin{align*}
\delta(\alpha,\beta)>0.
\end{align*}
 There exists   $T$ satisfying 
\begin{align}
\label{eq:mainT}
X^{\alpha(2\pi\varepsilon)^{1/2}}\le T \le (\log{X})X^{1/2},
\end{align}
 a subset
$$\cN_1\subseteq \{ 0\le \gamma \le T \ : \zeta(1/2+i\gamma)=0\},$$
satisfying:
\begin{enumerate} 
\item 
\begin{align*}
|\cN_1|\ge  \left(1-\frac{\varepsilon \beta}{200}\right)N(T),
\end{align*}
and 
$$|\cN_1\cap \cI|\ll \frac{1}{\varepsilon \beta},$$
for each interval $|\cI|\le 1/\log{T}.$
\item Some $\cX\subseteq \cY$ satisfying
\begin{align}
\label{eq:Xcard}
|\cX|\ge \varepsilon \delta(\alpha,\beta)|\cY|,
\end{align}
and for each $x\in \cX$, some $\beta(x)$ such that 
\begin{align*}
\left|\left\{ \gamma \in \cN_1 : \ \left\|\frac{\log{x}}{2\pi} \gamma+\beta(x) \right\|\le \rho \right\} \right|\ge 2\rho \left(1+\frac{\varepsilon \beta}{1000}\right)|\cN_1|.
\end{align*}
\end{enumerate}
\end{lemma}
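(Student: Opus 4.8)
The plan is to string together Lemma~\ref{lem:large-scale}, Lemma~\ref{lem:partition} and Lemma~\ref{lem:concentration}, keeping track of constants. First I would apply Lemma~\ref{lem:large-scale} with $\cY$ in the role of its $\cX$: since $0<\alpha,\beta<1$ and $\delta(\alpha,\beta)>0$ by hypothesis, this produces a value $T$ with $X^{\alpha(2\pi\varepsilon)^{1/2}}\le T\le(\log X)X^{1/2}$, which is~\eqref{eq:mainT}, together with a subset of $\cY$ — which I will call $\cX$ — satisfying $|\cX|\ge\varepsilon\delta(\alpha,\beta)|\cY|$, which is~\eqref{eq:Xcard}, and
\[
\Big|\sum_{0<\gamma\le T}x^{i\gamma}\Big|\ge 8\pi\beta\varepsilon\,N(T),\qquad x\in\cX .
\]
These will be the $T$ and $\cX$ of the statement.

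Next I would apply Lemma~\ref{lem:partition} with $K=1/(\varepsilon\beta)$, noting $K\ge 1$ since $\varepsilon,\beta<1$, and let $\cN_1$ be the restriction to $[0,T]$ of the regular set it produces. Then~\eqref{eq:N2} gives $|\cN_2\cap[0,T]|\le\frac{\varepsilon\beta}{200}N(T)$, so $|\cN_1|\ge\bigl(1-\frac{\varepsilon\beta}{200}\bigr)N(T)$, while~\eqref{eq:N1} gives $|\cN_1\cap\cI|\ll 1/(\varepsilon\beta)$ for every interval $\cI$ with $|\cI|\le 1/\log T$; this is part~(1). To transfer the large exponential sum from all zeros onto $\cN_1$, I would write $x^{i\gamma}=e\bigl(\frac{\log x}{2\pi}\gamma\bigr)$, discard the contribution of $\cN_2\cap[0,T]$ and bound it trivially by $|\cN_2\cap[0,T]|\le\frac{\varepsilon\beta}{200}N(T)$; since $8\pi-\frac{1}{200}\ge 25$ and $N(T)\ge|\cN_1|$, what remains is
\[
\Big|\sum_{\gamma\in\cN_1}e\Big(\tfrac{\log x}{2\pi}\gamma\Big)\Big|\ge\delta_0\,|\cN_1|,\qquad \delta_0:=\Bigl(8\pi-\tfrac{1}{200}\Bigr)\varepsilon\beta\ge 25\varepsilon\beta ,
\]
valid for every $x\in\cX$.

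Finally, for each fixed $x\in\cX$ I would apply Lemma~\ref{lem:concentration} with $N=|\cN_1|$, with the points $x_1,\dots,x_N$ being the elements of $\cN_1$ (which lie in $[0,T]$), with the frequency $\alpha$ of that lemma equal to $\frac{\log x}{2\pi}$, with its $\delta$ equal to $\delta_0$, and with its threshold $\varepsilon$ equal to $\rho$. Hypothesis~\eqref{eq:alphaconds} reads $\frac{\log x}{2\pi}T\ge 2/\rho$, which holds because $x\ge X$ and $T\ge X^{\alpha(2\pi\varepsilon)^{1/2}}$ force the left side to be at least $\frac{\log X}{2\pi}X^{\alpha(2\pi\varepsilon)^{1/2}}$, exceeding $2/\rho$ once $X$ is large; hypothesis~\eqref{eq:varepsilonconds} reads $\rho<\delta_0/C$, which holds provided $\rho$ is small in terms of $\varepsilon\beta$, as is the standing assumption. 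Lemma~\ref{lem:concentration} then supplies some $\beta(x)\in[0,1)$ with
\[
\Big|\Big\{\gamma\in\cN_1:\ \big\|\tfrac{\log x}{2\pi}\gamma+\beta(x)\big\|\le\rho\Big\}\Big|\ge 2\rho\Bigl(1+\tfrac{\delta_0}{16}\Bigr)|\cN_1|\ge 2\rho\Bigl(1+\tfrac{\varepsilon\beta}{1000}\Bigr)|\cN_1|,
\]
where the last inequality uses $\delta_0/16\ge 25\varepsilon\beta/16\ge\varepsilon\beta/1000$; this is part~(2).

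I expect no genuine obstacle, since the statement merely repackages the three preceding lemmas. The points needing a little care are the choice $K=1/(\varepsilon\beta)$, which converts the $\frac{1}{200K}$ loss of Lemma~\ref{lem:partition} into the advertised $\frac{\varepsilon\beta}{200}$; the check that discarding $\cN_2$ still leaves an exponential sum of size $\gg\varepsilon\beta|\cN_1|$, so that the gap between the constant $16$ delivered by Lemma~\ref{lem:concentration} and the $1000$ demanded here is comfortably positive; and the verification of the two mild hypotheses of Lemma~\ref{lem:concentration}. Throughout, zeros are counted with multiplicity, so $N(T)$ and $|\cN\cap[0,T]|$ coincide and the splitting $\cN=\cN_1\cup\cN_2$ is compatible with the exponential-sum identity.
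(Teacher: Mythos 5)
Your proposal is correct and follows essentially the same route as the paper's own proof: apply Lemma~\ref{lem:large-scale} to extract $T$ and $\cX$, Lemma~\ref{lem:partition} with $K\asymp 1/(\varepsilon\beta)$ to obtain $\cN_1$ (discarding $\cN_2$ costs at most $\frac{\varepsilon\beta}{200}N(T)$ in the exponential sum), and Lemma~\ref{lem:concentration} pointwise in $x$ to produce $\beta(x)$. Your bookkeeping of the constants is in fact somewhat more careful than the paper's.
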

\begin{proof}

By Lemma~\ref{lem:large-scale}, there exists 
\begin{align*}
X^{\alpha(2\pi\varepsilon)^{1/2}}\le T \le (\log{X})X^{1/2},
\end{align*}
and $\cX\subseteq \cY$ satisfying 
\begin{align*}
|\cX|\ge \frac{\varepsilon \delta(\alpha,\beta)}{100}|\cY|,
\end{align*}
such that
\begin{align*}
\left|\sum_{0<\gamma\le T}x^{i\gamma}\right|\ge \frac{\varepsilon \beta}{10}N(T), \quad x\in \cX.
\end{align*}
By Lemma~\ref{lem:partition}, there exists 
$$\cN_1\subseteq \{ 0\le \gamma \le T \ :  \zeta(1/2+i\gamma)=0 \},$$ 
satisfying 
\begin{align}
\label{eq:N1card}
|\cN_1|\ge \left(1-\frac{\varepsilon \beta}{200}\right)N(T),
\end{align}
\begin{align*}
\left|\sum_{\gamma \in \cN_1}x^{i\gamma}\right|\ge \frac{\varepsilon \beta}{100}N(T), \quad x\in \cX,
\end{align*}
and for any interval $|\cI|\le 1/\log{T}$
$$|\cN_1\cap \cI|\ll \frac{1}{\varepsilon \beta}.$$
By Lemma~\ref{lem:concentration} for any 
$$\rho\le \frac{\varepsilon}{C},$$
and each $x\in \cX$, there exists some $\beta(x)$ such that 
\begin{align*}
\left|\left\{ \gamma \in \cN_1 : \ \left\|\frac{\log{x}}{2\pi} \gamma+\beta(x) \right\|\le \rho \right\} \right|\ge 2\rho \left(1+\frac{\varepsilon \beta}{1000}\right)|\cN_1|,
\end{align*}
from which the result follows.
\end{proof}
Lemma~\ref{prop:main-reduction} implies we can concentrate $\gamma$'s into many rank one Bohr sets. We next use H\"{o}lder's inequality to find large rank Bohr sets containing many zeros. 
\begin{lemma}
\label{lem:largerank}
Let notation and conditions be as in Lemma~\ref{prop:main-reduction}.  For any integer $k$ 
we have 
\begin{align*}
(2\rho)^{k}\left(1+\frac{\varepsilon \beta}{1000}\right)^{k}|\cX|^{k}|\cN_1|&\le \sum_{x_1,\dots,x_k\in \cX}\max_{(\beta_1,\dots,\beta_k)\in \R^{k}}\sum_{\substack{\gamma \in \cN_1 \\ \|\log{x_j}\gamma/2\pi +\beta_j\|\le \rho \\ 1\le j \le k}}1.
\end{align*}
\end{lemma}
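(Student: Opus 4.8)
The plan is to start from the conclusion of Lemma~\ref{prop:main-reduction}, namely that for each $x\in\cX$ there exists $\beta(x)$ with
\begin{align*}
\left|\left\{ \gamma \in \cN_1 : \ \left\|\frac{\log{x}}{2\pi} \gamma+\beta(x) \right\|\le \rho \right\} \right|\ge 2\rho \left(1+\frac{\varepsilon \beta}{1000}\right)|\cN_1|,
\end{align*}
and to raise this inequality to the $k$-th power after summing over $x\in\cX$. Summing the left-hand side over all $x\in\cX$ gives a lower bound of $2\rho\left(1+\varepsilon\beta/1000\right)|\cN_1|\,|\cX|$ for
\begin{align*}
\sum_{x\in\cX}\#\left\{\gamma\in\cN_1:\ \left\|\tfrac{\log x}{2\pi}\gamma+\beta(x)\right\|\le\rho\right\}
=\sum_{\gamma\in\cN_1}\#\left\{x\in\cX:\ \left\|\tfrac{\log x}{2\pi}\gamma+\beta(x)\right\|\le\rho\right\},
\end{align*}
where I have interchanged the two sums. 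Write $g(\gamma)=\#\{x\in\cX:\ \|\tfrac{\log x}{2\pi}\gamma+\beta(x)\|\le\rho\}$ for the inner count, so that $\sum_{\gamma\in\cN_1}g(\gamma)\ge 2\rho\left(1+\varepsilon\beta/1000\right)|\cN_1|\,|\cX|$.

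Next I would apply H\"older's inequality (equivalently, the power-mean inequality) in the form
\begin{align*}
\left(\sum_{\gamma\in\cN_1}g(\gamma)\right)^{k}\le |\cN_1|^{k-1}\sum_{\gamma\in\cN_1}g(\gamma)^{k}.
\end{align*}
Combining with the lower bound on $\sum_{\gamma}g(\gamma)$ yields
\begin{align*}
(2\rho)^{k}\left(1+\frac{\varepsilon\beta}{1000}\right)^{k}|\cX|^{k}|\cN_1|\le\sum_{\gamma\in\cN_1}g(\gamma)^{k}.
\end{align*}
Finally, expanding $g(\gamma)^{k}$ as a sum over $k$-tuples $(x_1,\dots,x_k)\in\cX^{k}$,
\begin{align*}
g(\gamma)^{k}=\sum_{x_1,\dots,x_k\in\cX}\prod_{j=1}^{k}1\!\left[\left\|\tfrac{\log x_j}{2\pi}\gamma+\beta(x_j)\right\|\le\rho\right],
\end{align*}
and swapping the order of summation back so that the tuple sum is outermost, each fixed tuple contributes $\sum_{\gamma\in\cN_1}\prod_{j}1[\cdots]$, which for the choice $\beta_j=\beta(x_j)$ equals $\#\{\gamma\in\cN_1:\ \|\tfrac{\log x_j}{2\pi}\gamma+\beta_j\|\le\rho,\ 1\le j\le k\}$. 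Bounding this by the maximum over all $(\beta_1,\dots,\beta_k)\in\R^{k}$ gives exactly the claimed inequality.

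There is no real obstacle here: the argument is a packaging of H\"older's inequality together with two interchanges of summation order, and every quantitative input ($\cN_1$, $\cX$, the density $2\rho(1+\varepsilon\beta/1000)$, the range of $T$) is supplied verbatim by Lemma~\ref{prop:main-reduction}. The only point requiring the slightest care is making sure that, when one passes from the specific functionals $\beta(x_j)$ to a maximum over free parameters $(\beta_1,\dots,\beta_k)$, the inequality goes the right way — it does, since replacing the count for a particular tuple of shifts by its supremum over all shifts only increases it. The rest is bookkeeping, and the constants are not optimized at this stage, so no delicate estimation is needed.
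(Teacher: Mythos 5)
Your proposal is correct and follows essentially the same route as the paper: sum the conclusion of Lemma~\ref{prop:main-reduction} over $x\in\cX$, interchange summation, apply H\"older's inequality in the form $\left(\sum_\gamma g(\gamma)\right)^k\le|\cN_1|^{k-1}\sum_\gamma g(\gamma)^k$, expand the $k$-th power as a sum over tuples, and pass to the maximum over the shifts. No gaps.
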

\begin{proof}
By Lemma~\ref{prop:main-reduction}
\begin{align*}
(2\rho)\left(1+\frac{\varepsilon \beta}{1000}\right)|\cN_1||\cX|\le \sum_{\substack{x\in \cX}}\sum_{\substack{\gamma \in \cN_1 \\ \|\log{x}\gamma/2\pi +\beta(x)\|\le \rho}}1.
\end{align*}
Interchanging summation, applying H\"{o}lder's inequality then interchanging summation again
\begin{align*}
(2\rho)^{k}\left(1+\frac{\varepsilon \beta}{1000}\right)^{k}|\cX|^{k}|\cN_1|&\le \sum_{\gamma \in \cN_1}\left(\sum_{\substack{x\in \cX \\ \|\log{x}\gamma/2\pi +\beta(x)\|\le \rho}}1 \right)^{k} \\ 
&=\sum_{x_1,\dots,x_k\in \cX}\sum_{\substack{\gamma \in \cN_1 \\ \|\log{x_j}\gamma/2\pi +\beta(x_j)\|\le \rho \\ 1\le j \le k}}1,
\end{align*}
and the result follows after taking a maximum over $\beta(x_1),\dots,\beta(x_k)$.
\end{proof}
\subsection{Counting sequences in Bohr sets}
We next show how to count sequences of real numbers in Bohr sets via concentration into short intervals. Recall notation~\eqref{eq:BBBAT} and~\eqref{eq:BaT}
\begin{lemma}
\label{lem:bohr-sequence}
Let $T\ge 1$, $\alpha=(\alpha_1,\dots,\alpha_k)\in \R^{k}$ satisfy 
\begin{align}
\label{eq:alphajbound}
|\alpha_j|\le A\log{T}, \quad 1\le j \le k,
\end{align}
for some $A>0$. Let $\beta=(\beta_1,\dots,\beta_k)\in \R^{k}$ and $\rho>0$. For any finite set $\cA\subseteq [0,T]$  and $\eta>0$  we have 
\begin{align*}
|\cA\cap B(\alpha,\beta;\rho)|&\ll \frac{A\log{T}}{\eta \rho}\max_{\substack{\cI \ \ \text{interval} \\ |\cI|=2\eta \rho/(A\log{T})}}|\cA\cap \cI| \\ 
& \times  \max_{(\beta_1,\dots,\beta_k)\in \R^{k}}\mu(B(\alpha,\beta,T;\rho(1+\eta))).
\end{align*}
\end{lemma}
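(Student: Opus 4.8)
The plan is to cover $\cA\cap B(\alpha,\beta;\rho)$ by a controlled number of short intervals, each of which contributes at most $\max_{|\cI|=2\eta\rho/(A\log T)}|\cA\cap\cI|$ points, and then to bound the number of relevant short intervals by the measure of the thickened truncated Bohr set. Concretely, partition $[0,T]$ into consecutive intervals $\cI_1,\cI_2,\dots$ each of length exactly $L:=2\eta\rho/(A\log T)$ (there are $O(T/L)=O(TA\log T/(\eta\rho))$ of them, but that crude count is not what we want — we only want to count the intervals that actually meet the Bohr set). Call $\cI_i$ \emph{active} if $\cI_i\cap B(\alpha,\beta;\rho)\neq\emptyset$. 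Then
\begin{align*}
|\cA\cap B(\alpha,\beta;\rho)|\le \sum_{i\ \text{active}}|\cA\cap \cI_i|\le \Big(\#\{\text{active } i\}\Big)\cdot \max_{\substack{\cI\ \text{interval}\\ |\cI|=L}}|\cA\cap\cI|,
\end{align*}
so everything reduces to showing $\#\{\text{active } i\}\ll \dfrac{A\log T}{\eta\rho}\,\max_{\beta'}\mu(B(\alpha,\beta',T;\rho(1+\eta)))$.

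The key step is the thickening argument. Suppose $\cI_i$ is active, with some point $x_0\in\cI_i$ satisfying $\|\alpha_\ell x_0+\beta_\ell\|\le\rho$ for all $\ell$. For any other $x\in\cI_i$ we have $|x-x_0|\le L$, hence $|\alpha_\ell x-\alpha_\ell x_0|\le |\alpha_\ell| L\le A\log T\cdot 2\eta\rho/(A\log T)=2\eta\rho$; a small constant adjustment (replace $L$ by $L/2$, or absorb the factor $2$) makes this $\le\eta\rho$, so $\|\alpha_\ell x+\beta_\ell\|\le\rho+\eta\rho=\rho(1+\eta)$ for all $\ell$. Therefore the \emph{entire} interval $\cI_i$ is contained in $B(\alpha,\beta,T;\rho(1+\eta))$. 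Since the $\cI_i$ are disjoint and each has length $L$, the number of active intervals is at most $\mu(B(\alpha,\beta,T;\rho(1+\eta)))/L = \dfrac{A\log T}{2\eta\rho}\,\mu(B(\alpha,\beta,T;\rho(1+\eta)))$, which is bounded by $\dfrac{A\log T}{2\eta\rho}\,\max_{(\beta_1,\dots,\beta_k)\in\R^k}\mu(B(\alpha,\beta,T;\rho(1+\eta)))$. Combining with the displayed bound above and absorbing constants into $\ll$ gives the lemma.

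The only real point requiring care — the ``main obstacle,'' though it is mild — is matching the constants: one must choose the partition length so that the displacement estimate $|\alpha_\ell| L\le\eta\rho$ holds with the thickening budget exactly $\eta\rho$ rather than $2\eta\rho$, which is why the stated interval length in the $\max$ is $2\eta\rho/(A\log T)$ while the effective displacement bound uses half of that, or equivalently one allows an extra harmless constant in the final $\ll$. One should also note the harmless boundary issue that a point of $B(\alpha,\beta;\rho)$ might lie at an endpoint shared by two intervals $\cI_i$, which at most doubles the active count and is again absorbed by $\ll$. Everything else is bookkeeping: disjointness of the $\cI_i$ gives the volume-to-count conversion, and the hypothesis \eqref{eq:alphajbound} is used exactly once, in the displacement estimate.
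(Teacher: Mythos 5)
Your proposal is correct and takes essentially the same approach as the paper: both arguments hinge on the displacement estimate coming from~\eqref{eq:alphajbound}, which lets one pass from $B(\alpha,\beta;\rho)$ to $B(\alpha,\beta,T;\rho(1+\eta))$ after moving by at most $\eta\rho/(A\log T)$, and then convert the point count into the measure of the enlarged truncated Bohr set times a maximal short-interval count of $\cA$. The paper realises the thickening by integrating the indicator of the enlarged Bohr set over small translates of each point of $\cA\cap B(\alpha,\beta;\rho)$, whereas you partition $[0,T]$ into cells and count active cells; these are equivalent up to constants, and your handling of the factor of $2$ (using cells of half the length and noting the maximal count is monotone in the interval length) is fine.
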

\begin{proof}
Suppose $\gamma\in B(\alpha,\beta,\rho)$. In particular, for each $1\le j \le k$ we have 
\begin{align*}
\|\alpha_j\gamma+\beta_j\|\le \rho.
\end{align*}
By~\eqref{eq:alphajbound}, for any 
$$|x|\le \frac{\eta \rho}{A\log{T}},$$
we have 
\begin{align*}
\|\alpha_j(\gamma+x)+\beta_j\|\le \rho(1+\eta).
\end{align*}
This implies 
$$\gamma+x\in B(\alpha,\beta,\rho(1+\eta)).$$
Let $1_B$ denote the indicator function of $B(\alpha,\beta;\rho(1+\eta))$. From the above
\begin{align*}
&\frac{2\eta \rho}{A\log{T}}|\cA\cap B(\alpha,\beta;\rho)|= \sum_{\gamma \in \cA \cap B(\alpha,\beta;\rho)}\int_{-\eta \rho/(A\log{T})}^{\eta \rho/(A\log{T})}1_B(\gamma+x)dx \\ 
&\le \int_{-\eta \rho/(A\log{T})}^{T+\eta \rho/(A\log{T})}|\{ \gamma \in \cA \ : \ |\gamma-t|\le \eta \rho/(A\log{T})\}|1_B(t)dt \\
& \le \max_{\substack{\cI \ \ \text{interval} \\ |\cI|=2\eta \rho/(A\log{T})}}|\cA\cap \cI| \\ & \times \mu(B(\alpha,\beta;\rho(1+\eta))\cap [-\eta \rho/(A\log{T}),T+\eta\rho/(A\log{T})])
\end{align*}
and hence 
\begin{align*}
|\cA\cap B(\alpha,\beta;\rho)|&\ll \frac{A\log{T}}{\eta \rho}\max_{\substack{\cI \ \ \text{interval} \\ |\cI|=2\eta \rho/(A\log{T})}}|\cA\cap \cI| \\ 
& \times  \max_{(\beta_1,\dots,\beta_k)\in \R^{k}}\mu(B(\alpha,\beta,T;\rho(1+\eta))),
\end{align*}
which completes the proof.
\end{proof}
\section{Proof of Theorem~\ref{thm:main1}}
In order to obtain a contradiction, assume there exists a $X^{1-(1-2\delta)(2\pi \varepsilon)^{1/2}}$-spaced set $\cX$ satisfying 
\begin{align}
\label{eq:ttc1}
|\cX|\ge \exp\left(\frac{C}{(\varepsilon\delta)^2}\right)
\end{align}
and 
\begin{align*}
|\psi(x)-x|\ge \varepsilon (\log{x})^2x^{1/2}, \quad x\in \cX,
\end{align*}
for some absolute constant $C$. Apply Lemma~\ref{prop:main-reduction} with
\begin{align}
\label{eq:mainrhodef}
\alpha=1-\delta, \quad \beta=\delta, \quad \rho=\frac{\varepsilon}{C},
\end{align}
for a suitably large constant $C$.
We see that there exists $T$ satisfying
\begin{align}
\label{eq:mainT}
X^{(1-\delta)(2\pi \varepsilon)^{1/2}}\le T \le (\log{X})X^{1/2},
\end{align}
a subset $\cN_1\subseteq \{ 0\le \gamma \le T \ : \ \zeta(1/2+i\gamma)=0\}$ satisfying 
\begin{align}
\label{eq:mainNN1conds}
|\cN_1|\ge \left(1-\frac{\varepsilon \delta}{200}\right)N(T), \quad |\cN_1\cap \cI|\ll \frac{1}{\varepsilon \delta},
\end{align}
for each interval $|\cI|\le 1/(\log{T})$, by~\eqref{eq:ttc1} some $\cX_1\subseteq \cX$ satisfying 

\begin{align}
\label{eq:mainX1lb}
|\cX_1| \ge \varepsilon \delta \exp\left(\frac{C}{(\varepsilon \delta)^2}\right),
\end{align}
and for each $x\in \cX_1$ some $\beta(x)\in \R$ satisfying 
\begin{align*}
\left|\left\{ \gamma \in \cN_1 : \ \left\|\frac{\log{x}}{2\pi} \gamma+\beta(x) \right\|\le \rho \right\} \right|\ge 2\rho \left(1+\frac{\varepsilon \delta}{1000}\right)|\cN_1|.
\end{align*}
In particular, for any $k\ge 1$, by Lemma~\ref{lem:largerank}
\begin{align}
\label{eq:XXXXX}
(2\rho)^{k}\left(1+\frac{\varepsilon \delta}{1000}\right)^{k}|\cX|^{k}|\cN_1|&\le \sum_{x_1,\dots,x_k\in \cX_1}\max_{(\beta_1,\dots,\beta_k)\in \R^{k}}\sum_{\substack{\gamma \in \cN_1 \\ \|\log{x_j}\gamma/2\pi +\beta_j\|\le \rho \\ 1\le j \le k}}1.
\end{align}
For fixed $x_1,\dots,x_k\in \cX_1$, we apply Lemma~\ref{lem:bohr-sequence} with parameters 
\begin{align}
\label{eq:maineta}
\eta=\frac{\varepsilon \delta}{10000}, \quad A=\frac{2\log{X}}{\log{T}}.
\end{align}
Note that by~\eqref{eq:mainT}
\begin{align}
\label{eq:mainAub}
A\ll \frac{1}{\varepsilon^{1/2}}.
\end{align}
By~\eqref{eq:mainrhodef},~\eqref{eq:mainNN1conds} and Lemma~\ref{lem:bohr-sequence} 
\begin{align*}
\max_{(\beta_1,\dots,\beta_k)\in \R^{k}}\sum_{\substack{\gamma \in \cN_1 \\ \|\log{x_j}\gamma/2\pi +\beta_j\|\le \rho \\ 1\le j \le k}}1\ll \frac{\log{T}}{\varepsilon^{7/2}\delta^2}\max_{(\beta_1,\dots,\beta_k)\in \R^{k}}\mu(B(\tilde{x},\beta,T;\rho(1+\eta))),
\end{align*}
where 
$$\tilde x=\left(\frac{\log{x_1}}{2\pi} ,\dots,\frac{\log{x_k}}{2\pi}\right).$$
By~\eqref{eq:XXXXX}
\begin{align}
\label{eq:main1b4} \nonumber
&(2\rho)^{k}\left(1+\frac{\varepsilon \varepsilon_1}{1000}\right)^{k}|\cX|^{k}|\cN_1| \\ & \ll \frac{\log{T}}{\varepsilon^{7/2}\delta^2}\sum_{x_1,\dots,x_k\in \cX_1}\max_{(\beta_1,\dots,\beta_k)\in \R^{k}}\mu(B(\tilde{x},\beta,T;\rho(1+\eta))).
\end{align}
We next apply Lemma~\ref{lem:Bohrcard} to summation over $\tilde x$ in~\eqref{eq:main1b4}. We first verify the condition~\eqref{eq:Yspacing} is satisfied with parameter $\cY=\cX_1$. By assumption, $\cX_1$ is $X^{1-(1-2\delta)(2\pi \varepsilon)^{1/2}}$-spaced, so for any distinct $x_1,x_2\in \cX_1$, we have 
\begin{align*}
\left|\frac{\log{x_1}}{2\pi}-\frac{\log{x_2}}{2\pi}\right|&\gg \log{\left(1+X^{-(1-2\delta)(2\pi \varepsilon)^{1/2}}\right)}  \\ 
&\gg \frac{1}{X^{(1-2\delta)(2\pi \varepsilon)^{1/2}}},
\end{align*}
where we have used the fact that $\cX\subseteq [X,2X]$. By~\eqref{eq:mainT}, the condition~\eqref{eq:Yspacing} is satisfied. Applying Lemma~\ref{lem:Bohrcard} with 
\begin{align}
\eta=\frac{\varepsilon \delta}{10000},
\end{align}
gives
\begin{align*}
&\sum_{x_1,\dots,x_k\in \cX_1}\max_{(\beta_1,\dots,\beta_k)\in \R^{k}}\mu(B(\tilde{x},\beta,T;\rho(1+\eta)))\ll  \\ & \left( (2\rho)^{k}\left(1+\frac{\varepsilon \delta}{10000}\right)^{2k}+\frac{(C\log((\varepsilon \delta)^{-1})^{k}}{|\cX_1|}\right)T|\cX_1|^{k}.
\end{align*}
Combining the above with~\eqref{eq:rvm},~\eqref{eq:mainNN1conds} and~\eqref{eq:main1b4}, we see that
\begin{align*}
\left(1+\frac{\varepsilon \delta}{1000}\right)^{k}\ll \frac{1}{\varepsilon^{7/2}\varepsilon^2_1}\left(\left(1+\frac{\varepsilon \delta}{10000}\right)^{2k}+\frac{(C'\log((\varepsilon \delta)^{-1})^{k}}{|\cX_1|\varepsilon^{k}}\right),
\end{align*}
for some absolute constant $C'$. Recalling~\eqref{eq:mainX1lb} and taking 
\begin{align*}
k=\frac{A\log{(\varepsilon \delta)^{-1}}}{(\varepsilon \delta)}, 
\end{align*}
for a sufficiently large constant $A$, the above implies 
\begin{align*}
\left(1+\frac{\varepsilon \delta}{5000}\right)^{k}\ll \frac{1}{\varepsilon^{7/2}\delta^2},
\end{align*}
from which we obtain a contradiction. 

\section{Acknowledgement}
The author would like to thank the Max Planck Institute for Mathematics and the Australian Research Council (DE220100859) for their support.

The author would like to thank Pieter Moree and Igor Shparlinski for a number of useful comments.

\end{document}